\numberwithin{equation}{section}
\newtheorem{thm}{Theorem}[section]
\newtheorem{cor}[thm]{Corollary}
\newtheorem{lem}[thm]{Lemma}
\newtheorem{prop}[thm]{Proposition}
\newtheorem{defn}[thm]{Definition}
\newtheorem{rem}[thm]{Remark}
\newtheorem{ques}[thm]{Question}
\newcommand{\Ann}{\mbox{Ann}\,}
\newcommand{\width}{\mbox{width}\,}
\newcommand{\vdim}{\mbox{vdim}\,}
\newcommand{\coker}{\mbox{Coker}\,}
\newcommand{\Hom}{\mbox{Hom}\,}
\newcommand{\Ext}{\mbox{Ext}\,}
\newcommand{\Tor}{\mbox{Tor}\,}
\newcommand{\Spec}{\mbox{Spec}\,}
\newcommand{\Max}{\mbox{Max}\,}
\newcommand{\Ass}{\mbox{Ass}\,}
\newcommand{\ass}{\mbox{ass}\,}
\newcommand{\Supp}{\mbox{Supp}\,}
\newcommand{\depth}{\mbox{depth}\,}
\renewcommand{\dim}{\mbox{dim}\,}
\newcommand{\grd}{\mbox{grade}\,}
\newcommand{\pd}{\mbox{pd}\,}
\newcommand{\id}{\mbox{id}\,}
\newcommand{\fd}{\mbox{fd}\,}
\newcommand{\h}{\mbox{ht}\,}
\newcommand{\E}{\mbox{E}}
\renewcommand{\H}{\mbox{H}}
\newcommand{\Soc}{\mbox{Soc}\,}
\newcommand{\fa}{\mathfrak{a}}
\newcommand{\fb}{\mathfrak{b}}
\newcommand{\fm}{\mathfrak{m}}
\newcommand{\fp}{\mathfrak{p}}
\begin{document}
\bibliographystyle{amsplain}


\title[Characterizing local rings]
 {Characterizing local rings  via perfect and coperfect modules}

\bibliographystyle{amsplain}

     \author[M. Rahmani]{Mohammad Rahmani}
     \author[A.- J. Taherizadeh]{Abdoljavad Taherizadeh}

\address{Faculty of Mathematical Sciences and Computer,
Kharazmi University, Tehran, Iran.}

\email{m.rahmani.math@gmail.com}
\email{taheri@khu.ac.ir}

\keywords{Semidualizing modules, quasidualizing modules, dualizing modules, perfect modules, coperfect modules.}
\subjclass[2010]{13C05, 13H10, 13D07, 13D05}


\begin{abstract}
Let $R$ be a Noetherian ring and let $C$ be a semidualizing $R$-module. In this paper, by using the classes $ \mathcal{P}_C $ and $ \mathcal{I}_C $, we extend the notions of perfect and coperfect modules introduced by D.Rees \cite{R} and O.Jenda \cite{J1}. First, we study the basic properties of these modules and relations between them. Next, we characterize local rings in terms of the existence of special perfect (resp. coperfect) modules.
\end{abstract}

\maketitle

\bibliographystyle{amsplain}
\section{introduction}

Throughout this paper, $R$ is a commutative Noetherian ring with non-zero identity. Recall that the grade of $ M $,
defined by D. Rees \cite{R}, is the least integer $ i \geq 0 $ such that $ \Ext^i_R(M,R) \neq 0 $. He showed that if $M$ is finitely generated, then $ \grd_R(M) $ is the length of a maximal $R$-sequence contained in $ \Ann_R(M) $. An $R$-module $M$ is called perfect (resp. $ n $-perfect) if one has $ \grd_R(M) = \pd_R(M) $ (resp. $ \grd_R(M) = n = \pd_R(M) $) where $n$ is a non-negative integer. He, also, showed that $ \grd_R(M) $ is the least integer $ i \geq 0 $ such that $ \Ext^i_R(M,P) \neq 0 $ for some projective $R$-module $ P $. As a dual notion of grade, O. Jenda \cite{J1} introduced an invariant, namely $ E $-dimension. For an $R$-module $M$, $ E $-dimension of $ M $, denoted by $ E\emph{-}\dim_R(M) $, is the least integer $ i \geq 0 $ such that $ \Ext^i_R(I,M) \neq 0 $ for some injective $R$-module $ I $. Also, $M$ is called a coperfect (resp. $ n $-coperfect) $R$-module if one has $ E\emph{-}\dim_R(M) = \id_R(M) $ (resp. $ E\emph{-}\dim_R(M) = n = \id_R(M) $). A finitely generated $R$-module $C$ is semidualizing if the natural homothety map $ R\longrightarrow \Hom_R(C,C) $ is an isomorphism and $ \Ext^i_R(C,C)=0 $ for all $ i>0 $. Semidualizing modules have been studied by Foxby \cite{F}, Vasconcelos \cite{V}
and Golod \cite{G}. For a semidualizing $R$-module $C$, the class of $ C $-projectives (resp. $ C $-injectives), denoted $ \mathcal{P}_C $ (resp. $ \mathcal{I}_C $), consists of those $ R $-modules of the form $ C \otimes_R P $ (resp. $ \Hom_R(C,I) $) for some projective (resp. injective) $ R $-module $ P $ (resp. $ I $). In \cite{HW}, H. Holm and D. White showed that for an $R$-module $M$, there exists a projective resolution (resp. injective coresolution) by modules from $ \mathcal{P}_C $ (resp. $ \mathcal{I}_C $). Despite the fact that these (co)resolutions may not be exact, they still have good lifting properties. By using these (co)resolutions, R. Takahashi and D. White \cite{TW} defined two new homological dimensions, namely $ C\emph{-}\pd $ and $ C\emph{-}\id $. Recently, B. Kubik \cite{K}, introduced the dual notion of semidualizing modules, namely quasidualizing modules. Over a local ring $ (R, \fm) $, an Artinian $R$-module $T$ is quasidualizing if the natural homothety map $ \widehat{R} \longrightarrow \Hom_R(T,T) $ is an isomorphism and $ \Ext^i_R(T,T)=0 $ for all $ i>0 $. In section 2, for an $R$-module $M$, we introduce two invariant for modules,
 namely $ \mathcal{P}_C\emph{-}\grd_R(M) $ and $ \mathcal{I}_C\emph{-}\grd_R(M) $ as generalization of the classical invariants $ \grd_R(M) $ and $ E\emph{-}\dim_R(M) $, respectively. We study the basic properties of these new invariants and the relations of them with the relative homological dimensions $ C\emph{-}\pd_R(M) $ and $ C\emph{-}\id_R(M) $. For instance, the following result, proved in Theorem 3.11, is a generalization of \cite[Theorem 3.1]{J1}.\\
 \textbf{Theorem A.} \emph{Let $M$ be a finitely generated $R$-module and let $I$ be an injective $R$-module for which $ \emph{\Hom}_R(M,I) \neq 0 $. Then }\\
 \centerline{$ \mathcal{P}_C \emph{-}\grd_R(M) \leq \mathcal{I}_C \emph{-}\grd_R(\Hom_R(M,I)) \leq C \emph{-}\pd _R(M) $.}

Next, we define the notions of
 $ C $-perfect (resp. $ n $-$ C $-perfect)  and $ C $-coperfect (resp. $ n $-$ C $-coperfect) modules, and use them to impose some conditions on
 $ C $ to be dualizing. The next result is Theorem 3.19.  \\
  \textbf{Theorem B.} \emph{Let $ (R, \fm) $ be a local ring with $ \emph{\dim}(R) = n $ an let $C$ be a semidualizing $R$-module. The following are equivalent:
  \begin{itemize}
  	\item[(i)]{$C$ is dualizing.}
  	\item[(ii)]{$E(R/ \fm) $ is  $ n $-$ C $-perfect.}
  	\item[(iii)]{$\widehat{R}$ is $ n $-$ C $-coperfect.}
  \end{itemize}}
 We, also, use $ n $-perfect (resp. $ n $-coperfect) modules to characterize local rings. We find some special $ n $-perfect (resp. $ n $-coperfect) modules whose existence imply the Regularity (resp. Gorensteinness) of the ring. For instance, the following result is Theorem 3.20. \\
   \textbf{Theorem C.} 	\emph{Let $ (R, \fm) $ be a local ring with $ \emph{\dim}(R) = n $. The following are equivalent:
   \begin{itemize}
   	\item[(i)]{$R$ is Gorenstein.}
   	\item[(ii)]{any quasidualizing $R$-module is $ n $-perfect.}
   	\item[(iii)]{there exists an $ n $-perfect quasidualizing $R$-module with $ 1 $-dimensional socle.}
   \end{itemize}}
\section{preliminaries}

In this section, we recall some definitions and facts which are needed throughout this
paper. By an injective cogenerator, we always mean an injective $R$-module $E$ for which $ \Hom_R(M,E) \neq 0 $ whenever $M$ is a nonzero $R$-module. For an $R$-module $M$, the injective hull of $M$, is always denoted by $E(M)$.

\begin{defn}
	\emph{Let $\mathcal{X} $ be a class of $R$-modules and $M$ an $R$-module. An $\mathcal{X}$-\textit{resolution} of $M$ is a complex of $R$-modules in $\mathcal{X} $ of the form \\
		\centerline{ $X = \ldots \longrightarrow X_n \overset{\partial_n^X} \longrightarrow X_{n-1} \longrightarrow \ldots \longrightarrow X_1 \overset{\partial_1^X}\longrightarrow X_0 \longrightarrow 0$}
		such that $\H_0(X) \cong M$ and $\H_n(X) = 0$ for all $ n \geq 1$.}
	\emph{Also the $ \mathcal{X}$-\textit{projective dimension} of $M$ is the quantity \\
		\centerline{ $ \mathcal{X}$-$\pd_R(M) := \inf \{ \sup \{ n \geq 0 | X_n \neq 0 \} \mid X$ is an $\mathcal{X}$-resolution of $M \}$}.}
	\emph{So that in particular $\mathcal{X}$-$\pd_R(0)= - \infty $. The modules of $\mathcal{X}$-projective dimension zero are precisely the non-zero modules in $\mathcal{X}$. The terms of $\mathcal{X}$-\textit{coresolution} and $\mathcal{X}$-$\id$ are defined dually.}
\end{defn}

\begin{defn}
 \emph{A finitely generated $ R $-module $ C $ is \textit{semidualizing} if it satisfies the following conditions:
\begin{itemize}
             \item[(i)]{The natural homothety map $ R\longrightarrow \Hom_R(C,C) $ is an isomorphism.}
             \item[(ii)]{$ \Ext^i_R(C,C)=0 $ for all $ i>0 $.}
          \end{itemize}
         For example a finitely generated projective $R$-module of rank 1 is semidualizing. An $R$-module $D$ is \textit{dualizing} if it is semidualizing and that $\id_R (D) < \infty $. Moreover, $D$ is \textit{pointwise dualizing} if $ D_{\fm} $ is dualizing $R_{\fm}$-module for any $ \fm \in \Max(R) $. For example the canonical module of a Cohen-Macaulay local ring, if exists, is dualizing.}

\emph{Assume that $ (R, \fm) $ is local. Following \cite{K}, an Artinian $ R $-module $ T $ is called \textit{quasidualizing} if it satisfies the following conditions:
\begin{itemize}
	\item[(i)]{The natural homothety map $ \widehat{R} \longrightarrow \Hom_R(T,T) $ is an isomorphism.}
	\item[(ii)]{$ \Ext^i_R(T,T)=0 $ for all $ i>0 $.}
\end{itemize}}
\end{defn}
For example $ E(R/\fm) $ is a quasidualizing $ R $-module.

\begin{defn}
\emph{Following \cite{HJ}, let $C$ be a semidualizing $R$-module. We set
\begin{itemize}
	\item[]{$\mathcal{P}_C(R) =$ the subcategory of $R$--modules  $C \otimes_R P$ where $P$ is a projective $R$-module.}
	\item[]{ $\mathcal{I}_C(R) =$ the subcategory of $R$--modules  $\Hom_R(C,I) $ where $I$ is an injective $R$-module.}
\end{itemize}
The $R$-modules in $\mathcal{P}_C(R)$ and $\mathcal{I}_C(R)$ are called $C$-\textit{projective} and $C$-\textit{injective}, respectively.  If $ C = R $, then it recovers the classes of projective and injective modules, respectively.
We use the notations $C$-$\pd$ and $C$-$\id$ instead of $\mathcal{P}_C$-$\pd$ and $\mathcal{I}_C$-$\id$, respectively.}
\end{defn}

\begin{prop}\label{B0}
Let $C$ be a semidualizing $R$-module. Then we have the following:
\begin{itemize}
           \item[(i)]{$\emph\Supp (C) = \emph\Spec (R)$, $\emph\dim(C) = \emph\dim(R)$ and $\emph\Ass(C) = \emph\Ass(R)$.}
           \item[(ii)] { If $R \rightarrow S$ is a flat ring homomorphism, then $ C \otimes_R S$ is a semidualizing $S$-module.}
            \item[(iii)]{ If $ x \in R $ is $R$--regular, then $C/ xC$ is a semidualizing $R/ xR$-module.}
             \item[(iv)]{$\emph\depth_R (C) = \emph\depth (R) $.}
             \end{itemize}
\end{prop}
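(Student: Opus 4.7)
The plan is to prove the four items in order, drawing on the two defining properties of $C$---the homothety isomorphism $R \cong \Hom_R(C,C)$ and the Ext-vanishing $\Ext^i_R(C,C) = 0$ for $i > 0$---together with the standard formula $\Ass(\Hom_R(M,N)) = \Supp(M) \cap \Ass(N)$ (valid when $M$ is finitely generated). Each part feeds into the next: (iii) and (iv) both require (i), and (iv) also uses (iii).

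For (i), localizing the homothety isomorphism at any $\fp \in \Spec(R)$ gives $R_\fp \cong \Hom_{R_\fp}(C_\fp, C_\fp)$; since the left side is nonzero, so is the right, forcing $C_\fp \neq 0$ and hence $\Supp(C) = \Spec(R)$ and $\dim(C) = \dim(R)$. Applying the $\Ass$--$\Hom$ formula with $M = N = C$ then yields $\Ass(R) = \Ass(\Hom_R(C,C)) = \Supp(C) \cap \Ass(C) = \Ass(C)$. For (ii), flatness of $S$ together with finite presentation of $C$ supplies, for each $R$-module $M$ and each $i \geq 0$, a natural isomorphism $\Ext^i_R(C,M) \otimes_R S \cong \Ext^i_S(C \otimes_R S, M \otimes_R S)$; setting $M = C$ and invoking the semidualizing hypotheses on $C$ transfers both defining conditions to $C \otimes_R S$ over $S$.

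For (iii), part (i) forces every $R$-regular element $x$ to also be $C$-regular. Applying $\Hom_R(C, -)$ to $0 \to C \xrightarrow{x} C \to C/xC \to 0$ and using the Ext-vanishing produces $\Hom_R(C, C/xC) \cong R/xR$ and $\Ext^i_R(C, C/xC) = 0$ for $i \geq 1$. Invoking the classical change-of-rings isomorphism $\Ext^i_{R/xR}(C/xC, N) \cong \Ext^i_R(C, N)$---valid for any $R/xR$-module $N$ when $x$ is both $R$-regular and $C$-regular---with $N = C/xC$ transfers these statements to $R/xR$ and gives that $C/xC$ is semidualizing over $R/xR$. For (iv), I would localize at a maximal ideal (using (ii) to preserve semidualizingness) and induct on $\depth(R_\fm)$: the base case $\depth(R_\fm) = 0$ is immediate from (i), since $\fm R_\fm \in \Ass(R_\fm) = \Ass(C_\fm)$ gives $\depth_{R_\fm}(C_\fm) = 0$; the inductive step picks an $R_\fm$-regular $x$---which is $C_\fm$-regular by (i)---applies (iii), and combines the depth-shifting identity $\depth_{R_\fm}(C_\fm) = \depth_{R_\fm/xR_\fm}(C_\fm/xC_\fm) + 1$ with the induction hypothesis.

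The main subtle point is the change-of-rings step in (iii); everything else is a matter of chaining classical homological identities with the two semidualizing axioms, and no essential difficulty arises once the regularity of $x$ on $C$ has been obtained from (i).
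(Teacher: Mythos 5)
Your argument is correct and follows essentially the same route as the paper's (very terse) proof: the paper simply asserts that (i)--(iii) follow from the definition and proves (iv) by noting, exactly as you do, that $\Ass(C)=\Ass(R)$ makes $R$-regular elements $C$-regular and then running an easy induction on depth via the quotient $C/xC$. Your write-up just supplies the standard details (localizing the homothety map, flat base change for Ext, the long exact sequence plus the change-of-rings isomorphism of Bruns--Herzog type) that the paper leaves implicit, and these are all sound.
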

\begin{proof}
 The parts (i), (ii) and (iii) follow from the definition of semidualizing modules. For (iv), note that an element of $R$ is $R$-regular if and only if it is $C$-regular since $\Ass (C) = \Ass (R)$. Now an easy induction yields the equality.
\end{proof}

\begin{defn}
	\emph{Let $C$ be a semidualizing $R$-module. The \textit{Auslander class with
			respect to} $C$ is the class $\mathcal{A}_C(R)$ of $R$-modules $M$ such that:
		\begin{itemize}
			\item[(i)]{$\Tor_i^R(C,M) = 0 = \Ext^i_R(C, C \otimes_R M)$ for all $i \geq 1$, and}
			\item[(ii)]{The natural map $ M \rightarrow \Hom_R(C , C \otimes_R M )$ is an isomorphism.}
		\end{itemize}		
		The \textit{Bass class with
			respect to} $C$ is the class $\mathcal{B}_C(R)$ of $R$-modules $M$ such that:
		\begin{itemize}
			\item[(i)]{$\Ext^i_R(C,M) = 0 = \Tor_i^R(C, \Hom_R(C,M))$ for all $i \geq 1$, and}
			\item[(ii)]{The natural map $ C \otimes_R \Hom_R(C,M)) \rightarrow M $ is an isomorphism.}
		\end{itemize}
		The class $\mathcal{A}_C(R)$ contains all $R$-modules of finite projective dimension and those of finite $C$-injective dimension. Also the class $\mathcal{B}_C(R)$ contains all $R$-modules of finite injective dimension and those of finite $C$-projective dimension (see \cite[Corollary 2.9]{TW}). Also, if any two $ R $-modules in a short exact sequence are in $ \mathcal{A}_C(R) $ (resp. $ \mathcal{B}_C(R) $), then so is the third (see \cite[Corollary 6.3]{HW}).}
\end{defn}

\begin{defn}
\emph{Let $M$ be a non-zero $R$-module. Following \cite{R}, the \textit{grade} of $M$, denoted by $ \grd_R(M) $, is defined to be \\
	\centerline{$ \grd_R(M) = \inf\{ i \geq 0 \mid \Ext_R^i(M,R) \neq 0 \}$.}
	Also, $M$ is called a \textit{perfect} $R$-module precisely when $ \grd_R(M) = \pd_R(M) $.
	Following \cite{J1}, the $ E $-\textit{dimension} of $M$, denoted by $ E\emph{-}\dim_R(M) $, is defined to be \\
\centerline{$ E\emph{-}\dim_R(M) = \inf\{ i \geq 0 \mid \Ext_R^i(E,M) \neq 0 $ for some injective $R$-module $E$ $\}$.}
Also, $M$ is called a \textit{coperfect} $R$-module precisely when $ E\emph{-}\dim_R(M) = \id_R(M) $.}
\end{defn}

\begin{thm}\label{SQ}
Let $C$ be a semidualizing $R$-module and let $M$ be an $R$-module.
\begin{itemize}
           \item[(i)]{$C$-$\emph{\id}_R(M) = \emph{\id}_R (C \otimes_R M) $ and $\emph{\id}_R(M) =C$-$\emph{\id}_R(\emph{\Hom}_R(C,M))$}.
           \item[(ii)]{$C$-$\emph{\pd}_R(M) = \emph{\pd}_R (\emph{\Hom}_R(C,M))$ and $\emph{\pd}_R(M) =C$-$\emph{\pd}_R(C \otimes_R M) $}.
                 \end{itemize}
\end{thm}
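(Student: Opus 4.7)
The plan is to leverage the Foxby-style equivalence induced by the adjoint pair $\bigl(C \otimes_R -,\ \Hom_R(C,-)\bigr)$, which is a quasi-inverse pair when restricted to $\mathcal{I}_C$ and the category of injectives (resp.\ $\mathcal{P}_C$ and the projectives), essentially by the definitions of these classes. The key computations are: for any injective $R$-module $E$ one has $C \otimes_R \Hom_R(C,E) \cong E$ and $\Tor^R_{\geq 1}(C, \Hom_R(C,E)) = 0$ (because $E \in \mathcal{B}_C$), and dually for any projective $R$-module $P$ one has $\Hom_R(C, C \otimes_R P) \cong P$ and $\Ext^{\geq 1}_R(C, C\otimes_R P) = 0$ (because $P \in \mathcal{A}_C$).

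For the first equality in (i), I would transport (co)resolutions through these functors. Given an $\mathcal{I}_C$-coresolution $0 \to M \to \Hom_R(C,E^0) \to \cdots \to \Hom_R(C,E^n) \to 0$, split it into short exact sequences; every term lies in $\mathcal{A}_C$, so by the two-out-of-three stability of $\mathcal{A}_C$ every cosyzygy (and $M$ itself) lies in $\mathcal{A}_C$. The vanishing of $\Tor^R_{\geq 1}(C,-)$ on every cosyzygy then lets $C \otimes_R -$ pass through term-wise, yielding an injective coresolution $0 \to C \otimes_R M \to E^0 \to \cdots \to E^n \to 0$, whence $\id_R(C \otimes_R M) \leq C$-$\id_R(M)$. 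Conversely, if $\id_R(C \otimes_R M) = n < \infty$, then $C \otimes_R M$ lies in $\mathcal{B}_C$, as does every injective in a length-$n$ coresolution; two-out-of-three for $\mathcal{B}_C$ forces every cosyzygy into $\mathcal{B}_C$. Applying $\Hom_R(C,-)$, whose higher $\Ext$'s vanish on $\mathcal{B}_C$, produces an exact $\mathcal{I}_C$-coresolution of $\Hom_R(C, C \otimes_R M)$ of length $n$; since $C \otimes_R M \in \mathcal{B}_C$ the first term equals $M$, giving the opposite inequality.

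The second equality in (i) is then immediate: when $\id_R(M) < \infty$ then $M \in \mathcal{B}_C$ and $C \otimes_R \Hom_R(C,M) \cong M$, so applying the first equality to $N := \Hom_R(C,M)$ yields $C$-$\id_R(\Hom_R(C,M)) = \id_R(C \otimes_R \Hom_R(C,M)) = \id_R(M)$. Part (ii) is formally dual: swap injective with projective, $\mathcal{I}_C$ with $\mathcal{P}_C$, $\mathcal{B}_C$ with $\mathcal{A}_C$, and exchange the roles of $C\otimes_R -$ and $\Hom_R(C,-)$; the two relevant identities to invoke become $\Hom_R(C, C\otimes_R P) \cong P$ and $C\otimes_R \Hom_R(C,M) \cong M$ for $M\in\mathcal{B}_C$.

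The main obstacle (and really the only step that needs care) is ensuring that $\Hom_R(C,-)$ or $C \otimes_R -$ preserves exactness of the full (co)resolution, not merely of each individual short exact piece. The standard device is the inductive splitting into short exact sequences combined with the two-out-of-three property of $\mathcal{A}_C$ and $\mathcal{B}_C$ recalled in the preliminaries: once every (co)syzygy is known to lie in the appropriate class, the relevant higher $\Ext$ or $\Tor$ vanishes, and the functor can be applied term-wise without losing exactness.
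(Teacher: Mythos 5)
Your overall strategy (transporting (co)resolutions across the adjunction $(C\otimes_R-,\ \Hom_R(C,-))$ and using the two-of-three property of $\mathcal{A}_C(R)$ and $\mathcal{B}_C(R)$) is the standard one; note that the paper itself gives no proof but simply cites \cite[Theorem 2.11]{TW}, so you are in effect reconstructing that argument. The directions $\id_R(C\otimes_R M)\le C\text{-}\id_R(M)$ and $\pd_R(\Hom_R(C,M))\le C\text{-}\pd_R(M)$ are handled correctly. The genuine gap is in the converse directions. From $\id_R(C\otimes_R M)=n<\infty$ you correctly produce an $\mathcal{I}_C$-coresolution of length $n$ of $\Hom_R(C,C\otimes_R M)$, but you then identify this module with $M$ ``since $C\otimes_R M\in\mathcal{B}_C(R)$''. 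Membership of $C\otimes_R M$ in the Bass class only says that the counit $C\otimes_R\Hom_R(C,C\otimes_R M)\to C\otimes_R M$ is bijective; what you need is that the unit $M\to\Hom_R(C,C\otimes_R M)$ is bijective, i.e.\ that $M$ itself lies in $\mathcal{A}_C(R)$, and nothing you have established (nor the facts recalled in the paper's preliminaries) gives this. The same conflation occurs in the dual half of (ii), where you need $M\in\mathcal{B}_C(R)$ rather than $\Hom_R(C,M)\in\mathcal{A}_C(R)$. Moreover, for the second equalities you only treat the case $\id_R(M)<\infty$ (resp.\ $\pd_R(M)<\infty$); the implication ``$C\text{-}\id_R(\Hom_R(C,M))<\infty$ implies $\id_R(M)<\infty$'' and its dual are not addressed, and they need the same missing ingredient.

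That ingredient is exactly the nontrivial half of Foxby equivalence, recorded in \cite[Theorem 2.8]{TW}: $M\in\mathcal{A}_C(R)$ if and only if $C\otimes_R M\in\mathcal{B}_C(R)$, and $M\in\mathcal{B}_C(R)$ if and only if $\Hom_R(C,M)\in\mathcal{A}_C(R)$. Either invoke it, or prove it: by the triangle identities of the adjunction, bijectivity of the counit at $C\otimes_R M$ forces $C\otimes_R\eta_M$ to be bijective, where $\eta_M\colon M\to\Hom_R(C,C\otimes_R M)$ is the unit; since a semidualizing module satisfies $C\otimes_R X=0\Rightarrow X=0$ and $\Hom_R(C,X)=0\Rightarrow X=0$, and since $\Tor_1^R(C,\Hom_R(C,C\otimes_R M))=0$ (because $\Hom_R(C,-)$ carries $\mathcal{B}_C(R)$ into $\mathcal{A}_C(R)$), one deduces that $\eta_M$ is bijective and hence $M\in\mathcal{A}_C(R)$; the $\mathcal{B}_C(R)$ statement is dual. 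With this in hand your transport argument closes up, and the infinite cases of the second equalities follow too, since finiteness of either side then always forces the relevant class membership of $M$.
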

\begin{proof}
See \cite[Theorem 2.11]{TW}.
\end{proof}
\begin{lem}
	Let $C$ be a semidualizing $R$-module and let $M$ be a finitely generated $R$-module. Let $E$ be an injective cogenerator and $F$ be a faithfully flat $R$-module.
	\item[(i)]{One has $ C\emph{-\pd}_R(M) = C\emph{-\id}_R(\emph{\Hom}_R(M,E)) $.}
	\item[(ii)]{One has $ C\emph{-\id}_R(M) = C\emph{-\id}_R(M \otimes_R F) $.}
\end{lem}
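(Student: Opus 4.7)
The plan is to use Theorem \ref{SQ} to translate both $C\text{-}\pd$ and $C\text{-}\id$ into the classical $\pd$ and $\id$, after which the two assertions become standard facts: (a) the duality $\pd_R(N) = \id_R(\Hom_R(N,E))$ for a finitely generated $N$ over a Noetherian ring and an injective cogenerator $E$, and (b) the invariance of $\id_R$ under faithfully flat extension.

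For (i) I would chain the following equalities:
\begin{align*}
C\text{-}\id_R(\Hom_R(M,E)) &= \id_R(C \otimes_R \Hom_R(M,E)) \\
&= \id_R(\Hom_R(\Hom_R(C,M), E)) \\
&= \pd_R(\Hom_R(C,M)) \\
&= C\text{-}\pd_R(M).
\end{align*}
The outer equalities are direct applications of Theorem \ref{SQ}. The third equality is fact (a), applicable because $\Hom_R(C,M)$ is finitely generated (both $C$ and $M$ are, over the Noetherian ring $R$). The middle equality relies on the natural isomorphism $C \otimes_R \Hom_R(M,E) \cong \Hom_R(\Hom_R(C,M), E)$, which is where the most care is needed. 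Since $C$ is finitely presented, applying the right-exact functors $(-) \otimes_R \Hom_R(M,E)$ and $\Hom_R(\Hom_R(-,M), E)$ to a free presentation $R^m \to R^n \to C \to 0$ produces two right-exact sequences with identical first two terms, hence isomorphic cokernels, which yields the required isomorphism.

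For (ii), Theorem \ref{SQ} immediately reduces the statement to the classical identity $\id_R(N) = \id_R(N \otimes_R F)$ with $N = C \otimes_R M$. Since $R$ is Noetherian, $\id_R$ is detected by the vanishing of $\Ext^i_R(L,-)$ on finitely generated $L$, and for such $L$ with $F$ flat the base-change map $\Ext^i_R(L, N) \otimes_R F \to \Ext^i_R(L, N \otimes_R F)$ is an isomorphism. Faithful flatness of $F$ then yields both inequalities. The main technical hurdle across the whole lemma is thus the adjunction isomorphism in part (i); once it is in hand, the rest is a straightforward combination of Theorem \ref{SQ} with standard Ext--Tor base-change over Noetherian rings.
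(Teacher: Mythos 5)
Your proposal is correct and follows essentially the same route as the paper: part (i) is the identical chain of equalities (reduce via Theorem \ref{SQ}, apply the evaluation isomorphism $C \otimes_R \Hom_R(M,E) \cong \Hom_R(\Hom_R(C,M),E)$, then use that $\id_R(\Hom_R(N,E)) = \fd_R(N) = \pd_R(N)$ for the finitely generated module $N = \Hom_R(C,M)$), and part (ii) is the same reduction to the classical faithfully flat invariance of $\id_R$. The only difference is cosmetic: where the paper cites Enochs--Jenda for the evaluation isomorphism and leaves the flat-base-change step implicit, you supply the short standard arguments, which is fine.
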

\begin{proof}
(i). We have the following equalities
 \[\begin{array}{rl}
 C\emph{-}\id_R(\Hom_R(M,E)) &= \id_R(C \otimes_R \Hom_R(M,E)) \\
 &= \id_R(\Hom_R(\Hom_R(C , M) , E))\\
 &=\fd_R((\Hom_R(C , M))\\
 &=\pd_R(\Hom_R(C , M))\\
 &=C\emph{-}\pd_R(M),\\
  \end{array}\]
in which the first equality is from Theorem 2.7(i), the second equality is from \cite[Theorem 3.2.11]{EJ1}, the fourth equality holds since
$ \Hom_R(C , M) $ is finitely generated, and the last one is from Theorem 2.7(ii).

(ii). One has
 \[\begin{array}{rl}
 C\emph{-}\id_R(M) &= \id_R(C \otimes_R M) \\
 &= \id_R \big((C \otimes_R M) \otimes_R F\big)\\
 &= \id_R \big(C \otimes_R (M \otimes_R F)\big)\\
 &=C\emph{-}\id_R(M \otimes_R F),\\
 \end{array}\]
 in which the first and the last equalities are from Theorem 2.7(i).
\end{proof}

\section{main results}

Throughout this section, $C$ is a semidualizing $R$-module. In the rest of the paper, we use the notations $ \mathcal{P}\emph{-}\grd_R(M) $ and
$ \mathcal{I}\emph{-}\grd_R(M) $ instead of $ \grd_R(M) $ and $ E\emph{-}\dim_R(M) $, respectively.
\begin{defn}
\emph{Let $M$ be an $R$-module. The $ \mathcal{P}_C $-\textit{grade} of $M$, denoted by $ \mathcal{P}_C $-grade$ _R(M)$, is defined to be
	\begin{itemize}
		\item[]{$ \mathcal{P}_C $-grade$ _R(M) = \inf\{ i \geq 0 \mid \Ext_R^i(M,C \otimes_R P) \neq 0 $ for some projective $P \}$.}
	\end{itemize}
Also, the $ \mathcal{I}_C $-\textit{grade} of $M$, denoted by $ \mathcal{I}_C $-grade$ _R(M)$, is defined to be
	\begin{itemize}
		\item[]{$ \mathcal{I}_C $-grade$ _R(M) = \inf\{ i \geq 0 \mid \Ext_R^i(\Hom_R(C,E),M) \neq 0 $ for some injective $E \}$.}
	\end{itemize}}
\end{defn}
\begin{rem}
	\emph{One should note that if $ C = R $, then the above definition recovers the notions of \cite{R} and \cite{J1}, respectively. It is clear that
		$ \mathcal{P}_C$-$\grd_R(M) \leq \pd_R(M) $ and $ \mathcal{I}_C$-$\grd_R(M) \leq \id_R(M) $. Also, note that if $M$ is finitely generated, then \\
		\centerline{ $ \mathcal{P}_C$-$\grd_R(M) = \inf\{ i \geq 0 \mid \Ext_R^i(M,C) \neq 0 \}$.}
		For if $ \mathcal{P}_C$-$\grd_R(M) = n $ and $ \Ext_R^n(M,C \otimes_R P) \neq 0 $ for some projective $R$-module $P$, then
		$ \Ext_R^n(M,C) \otimes_R P \cong \Ext_R^n(M,C \otimes_R P) \neq 0 $ by \cite[Theorem 3.2.15]{EJ1}, and so $ \Ext_R^n(M,C) \neq 0 $. Therefore, we have
	 \[\begin{array}{rl}
	 \mathcal{P}_C\emph{-}\grd_R(M) &= \inf\{ i \geq 0 \mid \Ext_R^i(M,C) \neq 0 \}\\
	 &= \inf\{ \depth_{R_{\fp}}(C_{\fp}) \mid \fp \in \Supp_R(M) \}\\
	 &= \inf\{ \depth(R_{\fp}) \mid \fp \in \Supp_R(M) \}\\
	 &=  \mathcal{P}\emph{-}\grd_R(M).\\
	 \end{array}\]}
\end{rem}
	
\begin{lem}
Assume that $M$ is an $R$-module. The following statements hold true.
\begin{itemize}
	\item[(i)]{One has $ \mathcal{P}_C \emph{-grade}_R(M) \leq C\emph{-\pd}_R(M)$.}
	\item[(ii)]{One has $ \mathcal{I}_C \emph{-grade}_R(M) \leq C\emph{-\id}_R(M)$.}
\end{itemize}
\end{lem}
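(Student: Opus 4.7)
The plan is to reduce each inequality to the classical fact that a module of finite projective (resp.\ injective) dimension $n$ admits a projective (resp.\ injective) module witnessing a non-zero $\Ext^n$; Theorem~2.7 together with a derived tensor--Hom adjunction then transfers the witness into the $C$-relative setting. Both inequalities are trivial when the right-hand side is infinite, so I will assume the relevant relative dimension is a finite integer $n$.

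For part (i), $C$-$\pd_R(M)=n$ forces $M\in\mathcal{B}_C(R)$, and Theorem~2.7(ii) gives $\pd_R(N)=n$ for $N:=\Hom_R(C,M)\in\mathcal{A}_C(R)$. I would first establish the auxiliary classical fact that $\Ext^n_R(N,P_n)\neq 0$, where $P_n$ is the top term of a projective resolution $0\to P_n\to P_{n-1}\to\cdots\to P_0\to N\to 0$. Setting $K:=\Im(P_{n-1}\to P_{n-2})$, the short exact sequence $0\to P_n\to P_{n-1}\to K\to 0$ cannot split (else $K$ would be projective and $\pd_R(N)<n$), so $\Ext^1_R(K,P_n)\neq 0$, and the remaining $n-1$ dimension shifts yield $\Ext^n_R(N,P_n)\neq 0$. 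To translate back, I would apply the derived adjunction
\[
\uhom_R(M,C\otimes_R P_n)\simeq \uhom_R(C\utp N,\,C\otimes_R P_n)\simeq \uhom_R(N,\,\uhom_R(C,\,C\otimes_R P_n)),
\]
where the first identification uses $C\utp N\simeq C\otimes_R N\cong M$ (since $N\in\mathcal{A}_C(R)$ and $M\in\mathcal{B}_C(R)$). Combined with $\uhom_R(C,\,C\otimes_R P_n)\simeq P_n$ (the vanishing $\Ext^{>0}_R(C,C\otimes_R P_n)=0$ follows from flat base change applied to $\Ext^{>0}_R(C,C)=0$, while $\Hom_R(C,\,C\otimes_R P_n)\cong \Hom_R(C,C)\otimes_R P_n\cong P_n$), taking $H^n$ yields $\Ext^n_R(M,C\otimes_R P_n)\cong \Ext^n_R(N,P_n)\neq 0$, hence $\mathcal{P}_C$-$\grd_R(M)\leq n$.

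Part (ii) proceeds dually. $C$-$\id_R(M)=n$ forces $M\in\mathcal{A}_C(R)$, and Theorem~2.7(i) gives $\id_R(L)=n$ for $L:=C\otimes_R M\in\mathcal{B}_C(R)$. Given an injective coresolution $0\to L\to I^0\to\cdots\to I^n\to 0$, the kernel $J:=\Ker(I^{n-1}\to I^n)$ cannot be injective (else the truncation $0\to L\to I^0\to\cdots\to I^{n-2}\to J\to 0$ would be a coresolution of length $n-1$, contradicting $\id_R(L)=n$), so the short exact sequence $0\to J\to I^{n-1}\to I^n\to 0$ does not split and $\Ext^1_R(I^n,J)\neq 0$. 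Dimension shift along the truncated coresolution produces $\Ext^n_R(I^n,L)\neq 0$. The dual adjunction
\[
\uhom_R(I^n,\,L)\simeq \uhom_R(C\utp\Hom_R(C,I^n),\,L)\simeq \uhom_R(\Hom_R(C,I^n),\,\uhom_R(C,L)),
\]
together with $\uhom_R(C,L)\simeq M$ (since $M\in\mathcal{A}_C(R)$), then yields $\Ext^n_R(\Hom_R(C,I^n),M)\neq 0$, whence $\mathcal{I}_C$-$\grd_R(M)\leq n$.

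The main obstacle will be justifying the two derived identifications $\uhom_R(C,\,C\otimes_R P)\simeq P$ for projective $P$ and $\uhom_R(C,\,C\otimes_R M)\simeq M$ for $M\in\mathcal{A}_C(R)$; both reduce to the semidualizing vanishing $\Ext^{>0}_R(C,C)=0$ together with the defining properties of the Auslander/Bass classes. Once these and the derived adjunction are in place, the classical syzygy arguments above complete the proof.
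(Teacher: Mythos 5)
Your argument is correct, and at its core it performs the same reduction as the paper: use Theorem 2.7 and the Foxby classes to pass from $M$ to $\Hom_R(C,M)$ (resp.\ $C\otimes_R M$) and then invoke the classical facts $\mathcal{P}\emph{-}\grd\leq\pd$ and $\mathcal{I}\emph{-}\grd\leq\id$. The difference is in how the transfer is justified and in what is made explicit. The paper gets the key identification $\Ext^i_R(M,C\otimes_R P)\cong\Ext^i_R(\Hom_R(C,M),P)$ by quoting Takahashi--White's comparison of relative and absolute cohomology (\cite[Theorem 4.1, Corollary 4.2]{TW}), and then concludes from the vanishing of $\Ext^i_R(M,\mathcal{P}_C)$ for $i>n$; you instead rebuild the same isomorphism from the derived adjunction $\uhom_R(C\utp N,-)\simeq\uhom_R(N,\uhom_R(C,-))$ together with the Auslander/Bass identifications and $\uhom_R(C,C\otimes_R P)\simeq P$ (the latter being exactly the base-change isomorphism \cite[Theorem 3.2.15]{EJ1} that the paper also uses in Remark 3.2, valid since $C$ is finitely generated and $P$ is flat). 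Two further points in your favour: you explicitly carry out the syzygy/cosyzygy argument producing $\Ext^n_R(N,P_n)\neq 0$ and $\Ext^n_R(I^n,L)\neq 0$, so the bound $\leq n$ follows directly from the infimum definition, whereas the paper's proof as written only records vanishing above $n$ and leaves the non-vanishing (finiteness of the grade) implicit; and your dual part (ii) is written out rather than dismissed as ``similar.'' Your route is thus a little heavier on derived-category language but more self-contained (it avoids the relative Ext groups $\Ext_{\mathcal{I}_C}$, $\Ext_{\mathcal{P}_C}$ entirely), while the paper's citation of \cite{TW} is shorter; the only caveat is the trivial degenerate cases ($M=0$, or $n\leq 1$ in the indexing of the (co)syzygy sequences), which both you and the paper reasonably gloss over.
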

\begin{proof}
(i). We may assume that $ C $-$ \pd_R(M) = n < \infty $, since otherwise we have nothing to prove. In this case, we have $ M \in \mathcal{B}_C(R) $. By Theorem 2.7(ii), we have $ \pd_R(\Hom_R(C,M)) = n $. For any projective $R$-module $P$, we have the isomorphisms
\[\begin{array}{rl}
\Ext_R^i(M , C \otimes_R P) &\cong \Ext_R^i(C \otimes_R \Hom_R(C, M) , C \otimes_R P) \\
&\cong \Ext_{\mathcal{I}_C}^i(\Hom_R(C, M) , P)\\
&\cong \Ext_R^i(\Hom_R(C, M) , P),\\
\end{array}\]
in which the first isomorphism holds since $ M \in \mathcal{B}_C(R) $, the second isomorphism is from \cite[Theorem 4.1]{TW}, and the third isomorphism is from \cite[Corollary 4.2(b)]{TW}.
Hence $ \Ext_R^i(M,\mathcal{P}_C) = 0$ for all $ i > n $, whence $ \mathcal{P}_C \emph{-}\grd_R(M) \leq n $.

(ii). Is similar to the part (i).
\end{proof}
\begin{defn}
	\emph{Let $n$ be a non-negative integer. An $R$-module $M$ is said to be $ C $-\textit{perfect} (resp. $ n $-$ C $-\textit{perfect})
		 if $ \mathcal{P}_C \emph{-}\grd_R(M) = C \emph{-}\pd_R(M) $ (resp. $ \mathcal{P}_C \emph{-}\grd_R(M) = C \emph{-}\pd_R(M) = n $). Also,  $M$ is said to be $ C $-\textit{coperfect} (resp. $ n $-$ C $-\textit{coperfect})
		 if $ \mathcal{I}_C \emph{-}\grd_R(M) = C \emph{-}\id_R(M) $ (resp. $ \mathcal{I}_C \emph{-}\grd_R(M) = C \emph{-}\id_R(M) = n $).}
\end{defn}
		
		 Note that if $ C = R $, then the above definition recovers the notions of \cite{R} and \cite{J1}, respectively.
\begin{prop}
Let $M$ be an $R$-module.
\begin{itemize}
	\item[(i)]{If $ M \in \mathcal{B}_C(R) $, then $\mathcal{P}_C \emph{-grade}_R(M) = \mathcal{P}\emph{-\grd}_R(\emph{\Hom}_R(C,M)) $.}
	\item[(ii)]{If $ M \in \mathcal{A}_C(R) $, then $\mathcal{I}_C \emph{-grade}_R(M) = \mathcal{I}\emph{-\grd}_R(C \otimes_R M) $.}
\end{itemize}	
\end{prop}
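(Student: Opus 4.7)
The plan is to reduce both parts to the kind of isomorphism that already appeared in the proof of Lemma 3.3, and then read off the grade by taking the infimum over the appropriate class. Throughout I will use Foxby equivalence freely: if $M \in \mathcal{B}_C(R)$ then $N := \Hom_R(C,M)$ lies in $\mathcal{A}_C(R)$ with $M \cong C \otimes_R N$, and dually if $M \in \mathcal{A}_C(R)$ then $C \otimes_R M \in \mathcal{B}_C(R)$ with $M \cong \Hom_R(C,C \otimes_R M)$.

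For (i), I would fix a projective $R$-module $P$ and, using $M \cong C \otimes_R N$, run the same chain of isomorphisms that appears in Lemma 3.3(i):
\[
\Ext^i_R(M , C \otimes_R P) \cong \Ext^i_R(C \otimes_R N , C \otimes_R P) \cong \Ext^i_{\mathcal{I}_C}(N , P) \cong \Ext^i_R(N , P),
\]
where the middle step is \cite[Theorem 4.1]{TW} (valid because $N \in \mathcal{A}_C(R)$ and $P$ is projective, hence $P \in \mathcal{A}_C(R)$) and the last step is \cite[Corollary 4.2(b)]{TW}. Consequently $\Ext^i_R(M, C\otimes_R P) \neq 0$ for some projective $P$ if and only if $\Ext^i_R(N,P)\neq 0$ for some projective $P$. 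Taking the infimum over $i$ gives $\mathcal{P}_C\text{-grade}_R(M) = \mathcal{P}\text{-grd}_R(N) = \mathcal{P}\text{-grd}_R(\Hom_R(C,M))$.

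For (ii), I would argue dually. Since $M \in \mathcal{A}_C(R)$, set $L := C \otimes_R M \in \mathcal{B}_C(R)$ so $M \cong \Hom_R(C,L)$. For any injective $R$-module $E$ (which belongs to $\mathcal{B}_C(R)$), I would establish
\[
\Ext^i_R(\Hom_R(C,E) , M) \cong \Ext^i_R(\Hom_R(C,E) , \Hom_R(C,L)) \cong \Ext^i_{\mathcal{P}_C}(E , L) \cong \Ext^i_R(E , L),
\]
where the middle isomorphism is the dual half of \cite[Theorem 4.1]{TW} and the last is the dual version in \cite[Corollary 4.2(b)]{TW}. Taking infima on both sides yields $\mathcal{I}_C\text{-grade}_R(M) = \mathcal{I}\text{-grd}_R(L) = \mathcal{I}\text{-grd}_R(C\otimes_R M)$.

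The only real obstacle is bookkeeping: one must check that the modules appearing on the two sides of each Takahashi--White isomorphism lie in the categories where those results apply (i.e.\ $N \in \mathcal{A}_C(R)$, projectives and injectives in $\mathcal{A}_C(R)$ and $\mathcal{B}_C(R)$ respectively, and Ext over $\mathcal{P}_C$/$\mathcal{I}_C$ agreeing with the absolute Ext for modules in the appropriate Foxby class). Once these memberships are in place the argument is a single line on each side; no cohomological vanishing or spectral sequence is needed beyond what is packaged in \cite[Theorem 4.1]{TW} and \cite[Corollary 4.2(b)]{TW}.
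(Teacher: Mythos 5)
Your proposal is correct and follows essentially the same route as the paper: part (i) is verbatim the paper's chain of isomorphisms $\Ext^i_R(M,C\otimes_R P)\cong \Ext^i_R(C\otimes_R\Hom_R(C,M),C\otimes_R P)\cong \Ext^i_{\mathcal{I}_C}(\Hom_R(C,M),P)\cong \Ext^i_R(\Hom_R(C,M),P)$ via \cite[Theorem 4.1]{TW} and \cite[Corollary 4.2]{TW}, and your dual argument for (ii) is exactly what the paper intends by ``similar to part (i)''. The only cosmetic point is that in (ii) the relevant reference is the part of \cite[Corollary 4.2]{TW} for $\Ext_{\mathcal{P}_C}$ (cited as 4.2(a) elsewhere in the paper, e.g.\ in Lemma 3.6), not 4.2(b).
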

\begin{proof}
(i). For any projective $R$-module $P$, we have the isomorphisms
\[\begin{array}{rl}
\Ext_R^i(M , C \otimes_R P) &\cong \Ext_R^i(C \otimes_R \Hom_R(C, M) , C \otimes_R P) \\
&\cong \Ext_{\mathcal{I}_C}^i(\Hom_R(C, M) , P)\\
&\cong \Ext_R^i(\Hom_R(C, M) , P),\\
\end{array}\]
in which the first isomorphism holds since $ M \in \mathcal{B}_C(R) $, the second isomorphism is from \cite[Theorem 4.1]{TW}, and the third isomorphism is from \cite[Corollary 4.2(b)]{TW}. Hence the result follows.

(ii). Is similar to the part (i).
\end{proof}
\begin{lem}\label{A2}
Let $M$ be an $R$-module with $ C $\emph{-\id}$ _R(M) < \infty $ and let $n$ be a non-negative integer. The following are equivalent:
\begin{itemize}
	\item[(i)]{$ C $\emph{-\id}$ _R(M) \leq n $.}
	\item[(ii)]{$ \emph{\Ext}_R^i(\mathcal{I}_C,M) = 0 $ for all $ i > n $.}
\end{itemize}
\end{lem}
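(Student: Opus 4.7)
The plan is to prove both implications by an $\mathcal{I}_C$-coresolution plus dimension-shift argument, using that $C\emph{-}\id_R(M) < \infty$ places $M$ in the Auslander class $\mathcal{A}_C(R)$. The key auxiliary input is the vanishing $\Ext_R^{j}(T',T) = 0$ for every $j \geq 1$ and all $T, T' \in \mathcal{I}_C(R)$. To see this, write $T' = \Hom_R(C,I)$ and $T = \Hom_R(C,J)$ with $I,J$ injective; Foxby equivalence gives $\Hom_R(C,I), \Hom_R(C,J) \in \mathcal{A}_C(R)$, and derived tensor-Hom adjunction then yields $\uhom_R(T',T) \simeq \uhom_R(T' \utp_R C,\, J) \simeq \uhom_R(I,J)$, which is concentrated in degree $0$ because $J$ is injective. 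Equivalently, this follows from \cite[Theorem 4.1]{TW} together with \cite[Corollary 4.2(b)]{TW}, already invoked in Proposition 3.3.

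For $(i) \Rightarrow (ii)$, fix an $\mathcal{I}_C$-coresolution $0 \to M \to T^0 \to T^1 \to \cdots \to T^n \to 0$ and decompose it into short exact sequences $0 \to K^j \to T^j \to K^{j+1} \to 0$ with $K^0 = M$ and $K^n = T^n$. For any $T \in \mathcal{I}_C$, iterating the long exact sequence of $\Ext_R^{\bullet}(T,-)$ and invoking the auxiliary vanishing to kill the intermediate $\Ext_R^{\geq 1}(T, T^j)$ terms produces $\Ext_R^i(T, M) = 0$ for every $i > n$, which is (ii). For $(ii) \Rightarrow (i)$, set $m = C\emph{-}\id_R(M)$, which is finite by hypothesis, and by definition of $C\emph{-}\id$ choose an $\mathcal{I}_C$-coresolution $0 \to M \to T^0 \to \cdots \to T^m \to 0$ with $T^m \neq 0$. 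The same dimension shift yields $\Ext_R^m(T^m, M) \cong \coker\bigl(\Hom_R(T^m, T^{m-1}) \to \Hom_R(T^m, T^m)\bigr)$, which vanishes if and only if the identity of $T^m$ lifts along the surjection $T^{m-1} \twoheadrightarrow T^m$, if and only if that surjection splits. A splitting would make $K^{m-1}$ a direct summand of $T^{m-1}$; applying $C \otimes_R -$ (using $\mathcal{I}_C \subseteq \mathcal{A}_C$) then makes $C \otimes_R K^{m-1}$ a direct summand of the injective module $C \otimes_R T^{m-1}$, hence itself injective, so $K^{m-1} \cong \Hom_R(C,\, C \otimes_R K^{m-1}) \in \mathcal{I}_C(R)$ and we obtain an $\mathcal{I}_C$-coresolution of $M$ of length $m-1$, contradicting $C\emph{-}\id_R(M) = m$. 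Therefore $\Ext_R^m(T^m, M) \neq 0$, and since $T^m \in \mathcal{I}_C$, condition (ii) forces $m \leq n$.

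The main obstacle is establishing the auxiliary vanishing $\Ext_R^{\geq 1}(\mathcal{I}_C, \mathcal{I}_C) = 0$ cleanly, together with the observation that a splitting inside $\mathcal{I}_C$ can be transported through Foxby equivalence to shorten coresolutions; once these are in place, both directions follow from the standard dimension-shift and truncation pattern used to prove the classical characterization of injective dimension via $\Ext$.
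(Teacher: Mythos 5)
Your proof is correct, and in the substantive direction (ii)$\Rightarrow$(i) it is essentially the paper's own argument: both rest on the auxiliary vanishing $\Ext_R^{j}(\mathcal{I}_C,\mathcal{I}_C)=0$ for $j\geq 1$ (you obtain it by derived adjunction, the paper by citing \cite[Theorem 3.2.1]{EJ1} together with $\Tor^R_{j}(C,\mathcal{I}_C)=0$), followed by dimension shifting along an exact $\mathcal{I}_C$-coresolution of length $m=C$-$\id_R(M)$ and the observation that vanishing of the relevant $\Ext^1$ forces the tail surjection to split, shortening the coresolution. The paper simply asserts that the resulting direct summand $K^{m-1}$ of a $C$-injective module is again $C$-injective, whereas you justify this through Foxby equivalence; for that step you should say explicitly that $K^{m-1}\in\mathcal{A}_C(R)$, which follows from $M\in\mathcal{A}_C(R)$ (finite $C$-$\id$), $\mathcal{I}_C\subseteq\mathcal{A}_C(R)$, and the two-out-of-three property recorded in Definition 2.5 (also note your computation of $\Ext_R^m(T^m,M)$ tacitly assumes $m\geq 1$, which is harmless since $m\leq n$ is trivial when $m=0$). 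Where you genuinely diverge is (i)$\Rightarrow$(ii): the paper transports the problem across Foxby equivalence, using $M\in\mathcal{A}_C(R)$, $C\otimes_R M\in\mathcal{B}_C(R)$ and \cite[Theorem 4.1, Corollary 4.2(a)]{TW} to get $\Ext_R^i(\Hom_R(C,I),M)\cong\Ext_R^i(I,C\otimes_R M)$ and then invokes $\id_R(C\otimes_R M)\leq n$, while you dimension-shift directly along the length-$n$ coresolution using the same auxiliary vanishing. Your route is somewhat more self-contained, reusing a single Ext-vanishing lemma for both implications and needing no Bass-class input for this direction; the paper's route yields the stronger degreewise identification of $\Ext_R^i(\mathcal{I}_C,M)$ with ordinary Ext against an injective module, not merely its vanishing.
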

\begin{proof}
 (i)$\Longrightarrow$(ii). By Theorem 2.7(i), we have $ \id_R(C \otimes_R M) \leq n $. Also, by \cite[Theorem 2.8(b)]{TW}, $ C \otimes_R M \in \mathcal{B}_C(R) $ since
  $ M \in \mathcal{A}_C(R) $. Now, for any injective $R$-module $I$, we have the isomorphisms
   \[\begin{array}{rl}
   \Ext_R^i(\Hom_R(C,I) , M) &\cong \Ext_R^i(\Hom_R(C,I) , \Hom_R(C, C \otimes_R M) \\
   &\cong \Ext_{\mathcal{P}_C}^i(I , C \otimes_R M)\\
   &\cong \Ext_R^i(I , C \otimes_R M),\\
   \end{array}\]
   in which the first isomorphism holds because $ M \in \mathcal{A}_C(R) $, the second isomorphism is from \cite[Theorem 4.1]{TW}, and the last one is from \cite[Corollary 4.2(a)]{TW} Now, the result follows.

    (ii)$\Longrightarrow$(i). Assume, on the contrary, that $ C $-$ \id_R(M) = t > n $. By \cite[Corollary 2.4(b)]{TW}, there is an exact sequence \\
    \centerline{$ 0 \rightarrow M \rightarrow \Hom_R(C,I^0) \rightarrow \Hom_R(C,I^1) \rightarrow \cdots \rightarrow \Hom_R(C,I^t) \rightarrow 0 $,}
    in which $ I^i $  is an injective $R$-module. Let $K^i$ be the image of the $ i $-th boundary map of the above coresolution. Note that
    $ \Ext^j_R(\mathcal{I}_C , \mathcal{I}_C) = 0 $ for all $ j \geq 1 $, by \cite[Theorem 3.2.1]{EJ1} since
    $ \Tor_j^R(C,\mathcal{I}_C) = 0 $ for all $ j \geq 1 $. Therefore a dimension shifting argument yields \\
    \centerline{$ \Ext^1_R(\mathcal{I}_C,K^{t-1}) \cong \Ext^t_R(\mathcal{I}_C , M) = 0 $.}
    Thus, in particular, the exact sequence $0 \rightarrow K^{t-1} \rightarrow \Hom_R(C,I^{t-1}) \rightarrow \Hom_R(C,I^t) \rightarrow 0 $ splits. Consequently, $ K^{t-1} $ is $ C $-injective and then  $ C $-$ \id_R(M) = t-1$, a contradiction.
\end{proof}


\begin{lem}\label{A2}
Let $M$ be an $R$-module with $ C $\emph{-\pd}$ _R(M) < \infty $ and let $n$ be a non-negative integer. The following are equivalent:
\begin{itemize}
	\item[(i)]{$ C $\emph{-\pd}$ _R(M) \leq n $}
	\item[(ii)]{$ \emph{\Ext}_R^i(M,\mathcal{P}_C) = 0 $ for all $ i > n $.}
\end{itemize}	
\end{lem}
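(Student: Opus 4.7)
The plan is to mirror the preceding lemma (the $C$-$\id$ version), interchanging the roles of the Auslander and Bass classes and swapping $\Hom_R(C,-)$ with $C\otimes_R-$ throughout.

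For (i)$\Rightarrow$(ii), I would use that $C$-$\pd_R(M)<\infty$ places $M$ in $\mathcal{B}_C(R)$, and that $\pd_R(\Hom_R(C,M))\leq n$ by Theorem 2.7(ii). For any projective $R$-module $P$, the same chain of isomorphisms as in Proposition 3.5(i) reads
$$\Ext^i_R(M,C\otimes_R P)\cong \Ext^i_R(C\otimes_R \Hom_R(C,M),\, C\otimes_R P)\cong \Ext^i_R(\Hom_R(C,M),P),$$
using $M\in\mathcal{B}_C(R)$ together with \cite[Theorem 4.1]{TW} and \cite[Corollary 4.2(b)]{TW}; the right-hand side vanishes for all $i>n$.

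For (ii)$\Rightarrow$(i), I would argue by contradiction: assume $C$-$\pd_R(M)=t>n$ and invoke \cite[Corollary 2.4(a)]{TW} to fix a bounded $C$-projective resolution
$$0\longrightarrow C\otimes_R P_t\longrightarrow C\otimes_R P_{t-1}\longrightarrow\cdots\longrightarrow C\otimes_R P_0\longrightarrow M\longrightarrow 0.$$
Let $K=\coker(C\otimes_R P_t\to C\otimes_R P_{t-1})$, so that there is a short exact sequence $0\to C\otimes_R P_t\to C\otimes_R P_{t-1}\to K\to 0$, and by exactness of the original resolution $K$ fits into a truncated $C$-projective resolution $0\to K\to C\otimes_R P_{t-2}\to \cdots\to C\otimes_R P_0\to M\to 0$ of length $t-1$. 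A preliminary fact I need is $\Ext^j_R(\mathcal{P}_C,\mathcal{P}_C)=0$ for $j\geq 1$: since every projective lies in $\mathcal{A}_C(R)$, the equality $\Ext^j_R(C,C\otimes_R Q)=0$ holds for projective $Q$; and $C\otimes_R P$ is a direct summand of a coproduct of copies of $C$, from which the required $\Ext$ vanishing follows. Dimension-shifting along the truncated resolution now gives
$$\Ext^1_R(K,C\otimes_R P)\cong \Ext^t_R(M,C\otimes_R P)=0$$
for every projective $P$ by hypothesis (ii), and so the displayed short exact sequence splits.

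The genuinely delicate step is to upgrade ``$K$ is a direct summand of a $C$-projective'' to ``$K$ is itself $C$-projective.'' I would observe that every syzygy in the original resolution lies in $\mathcal{B}_C(R)$, since $C\otimes_R P_i$ and $M$ are all in $\mathcal{B}_C(R)$ and the Bass class is closed under the two-out-of-three property by \cite[Corollary 6.3]{HW}. Applying $\Hom_R(C,-)$ to the split short exact sequence, and using the counit isomorphism $\Hom_R(C,C\otimes_R P_i)\cong P_i$ (valid because $P_i\in\mathcal{A}_C(R)$), I obtain $P_{t-1}\cong \Hom_R(C,K)\oplus P_t$, so $\Hom_R(C,K)$ is projective; the Bass class isomorphism $K\cong C\otimes_R\Hom_R(C,K)$ then exhibits $K$ as $C$-projective. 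Substituting into the truncated resolution reduces its length to $t-1$, contradicting $C$-$\pd_R(M)=t$.
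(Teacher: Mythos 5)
Your argument is correct and is essentially the paper's proof: the paper simply dualizes Lemma 3.6, and your (i)$\Rightarrow$(ii) via $M\in\mathcal{B}_C(R)$ and Theorem 2.7(ii), together with the contradiction argument using a bounded $\mathcal{P}_C$-resolution, the vanishing $\Ext^{j\geq 1}_R(\mathcal{P}_C,\mathcal{P}_C)=0$, dimension shifting, and splitting of the top short exact sequence, is exactly that dual argument. The only difference is that you spell out why the summand $K$ of a $C$-projective is itself $C$-projective (via the Bass class and the counit isomorphism), a step the paper leaves implicit in the dual setting; this is a welcome clarification rather than a different route.
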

\begin{proof}
Is dual to the proof of Lemma 3.6.
\end{proof}


\begin{prop}\label{A2}
Let $M$ be an $R$-module and let $ n $ be a non-negative integer. The following statements hold true.
	\item[(i)]{Assume that $ C $\emph{-\pd}$ _R(M) < \infty $. Then $M$ is $ n $\emph{-}$C$\emph{-}perfect if and only
		if $ \emph{\Ext}_R^i(M,\mathcal{P}_C) = 0 $ for all $ i \neq n $ and that $ \emph{\Ext}_R^n(M,C \otimes_R P) \neq 0 $ for some projective $ R $-module $P$.}
	\item[(ii)]{Assume that $ C $\emph{-\id}$ _R(M) < \infty $. Then $M$ is $ n $\emph{-}$C$\emph{-}coperfect if and only
		if $ \emph{\Ext}_R^i(\mathcal{I}_C,M) = 0 $ for all $ i \neq n $ and that $ \emph{\Ext}_R^n(\emph{\Hom}_R(C,I),M) \neq 0 $ for some injective $ R $-module $I$.}
\end{prop}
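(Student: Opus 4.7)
The plan is to read the statement as a clean combination of three ingredients already available: the inequality $\mathcal{P}_C\emph{-}\grd_R(M) \leq C\emph{-}\pd_R(M)$ (and its dual) from Lemma 3.3, the characterization of $C\emph{-}\pd_R(M) \leq n$ by vanishing of $\Ext_R^{>n}(M,\mathcal{P}_C)$ (and its dual) from Lemma 3.7 (resp.\ Lemma 3.6), and the definition of $\mathcal{P}_C\emph{-}\grd$ (resp.\ $\mathcal{I}_C\emph{-}\grd$) from Definition 3.1. Since (ii) is formally dual to (i), I would give (i) in full and remark that (ii) is analogous, interchanging the roles of $\mathcal{P}_C$ and $\mathcal{I}_C$ and invoking Lemma 3.6 in place of Lemma 3.7.

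For the forward direction of (i), assume $M$ is $n$-$C$-perfect, so that $\mathcal{P}_C\emph{-}\grd_R(M) = C\emph{-}\pd_R(M) = n$. Unpacking the first equality by definition gives $\Ext_R^i(M,C\otimes_R P) = 0$ for every $i<n$ and every projective $P$, along with the required non-vanishing at $i=n$ for some projective $P$. The second equality together with Lemma 3.7 yields $\Ext_R^i(M,\mathcal{P}_C) = 0$ for all $i>n$. These two pieces combine to give the claimed vanishing for all $i\neq n$.

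For the converse, assume the stated vanishing and non-vanishing hold. The vanishing for $i<n$ together with the non-vanishing at $i=n$ give $\mathcal{P}_C\emph{-}\grd_R(M) = n$ directly from the definition. The vanishing for $i>n$, combined with the hypothesis $C\emph{-}\pd_R(M) < \infty$ (which is precisely what is required to apply Lemma 3.7 in its non-trivial direction), yields $C\emph{-}\pd_R(M) \leq n$. Finally, chaining this with Lemma 3.3(i) gives $n = \mathcal{P}_C\emph{-}\grd_R(M) \leq C\emph{-}\pd_R(M) \leq n$, so both are equal to $n$ and $M$ is $n$-$C$-perfect. No real obstacle appears; the only thing to be mindful of is that the finiteness hypothesis on $C\emph{-}\pd_R(M)$ (resp.\ $C\emph{-}\id_R(M)$) is exactly what licenses the use of Lemma 3.7 (resp.\ Lemma 3.6) in the backward direction, so it must be flagged when invoking these lemmas.
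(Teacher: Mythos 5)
Your proposal is correct and follows essentially the same route as the paper: the forward direction uses the definition of $\mathcal{P}_C$-grade for $i<n$ and the non-vanishing at $n$, together with Lemma 3.7 for $i>n$, and the converse chains Lemma 3.3(i) with Lemma 3.7 to get $n=\mathcal{P}_C\mbox{-}\grd_R(M)\leq C\mbox{-}\pd_R(M)\leq n$, with (ii) treated as the dual via Lemma 3.6. Your explicit remark that the finiteness hypothesis is what licenses Lemma 3.7 in the backward direction matches the role the paper assigns to it (stated there via $M\in\mathcal{B}_C(R)$), so nothing is missing.
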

\begin{proof}
(i). Assume that $M$ is $ n $-$ C $-perfect. Then we have $ \mathcal{P}_C $-grade$ _R(M) = C $-$ \pd(M) = n$, by definition. Thus
$ \Ext_R^i(M,\mathcal{P}_C) = 0 $ for all $ i < n $. On the other hand, by Lemma 3.7, we have $ \Ext_R^i(M,\mathcal{P}_C) = 0 $ for all  $ i > n $. The remaining part follows from definition. For the converse, note that by assumption, we have $  \mathcal{P}_C $-grade$ _R(M) = n $. Now, since $ M \in  \mathcal{B}_C(R) $, in view of Lemma 3.3(i) and Lemma 3.7, we have \\
\centerline{$ n = \mathcal{P}_C $-grade$ _R(M) \leq C\emph{-}\pd_R(M) \leq n $,}
whence $M$ is $ n $-$ C $-perfect.

 (ii). Is dual of part (i).
\end{proof}

\begin{thm}\label{SQ}
Let $M$ and $N$ be $R$-modules. The following statements hold true.
\begin{itemize}
           \item[(i)]{Assume that $ \mathcal{I}_C $\emph{-grade}$ _R(M) = n $ and $ C $\emph{-\id}$ _R(N) = m < n$. Then
           	 $ \emph{\Ext}_R^i(N,M) = 0 $ for all $ i < n-m $.}
           \item[(ii)] {Assume that $ \mathcal{P}_C $\emph{-grade}$ _R(N) = n $ and $ C $\emph{-\pd}$ _R(M) = m < n$. Then
           	$ \emph{\Ext}_R^i(N,M) = 0 $ for all $ i < n-m $.}
                 \end{itemize}
\end{thm}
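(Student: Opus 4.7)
The plan is to prove both parts by a dimension-shifting argument along a bounded $\mathcal{I}_C$-coresolution of $N$ (for (i)) and a bounded $\mathcal{P}_C$-resolution of $M$ (for (ii)), in the same spirit as the splicing argument used to prove Lemma 3.6.

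For part (i), I would first invoke $C$-$\id_R(N) = m$ together with \cite[Corollary 2.4(b)]{TW} to obtain a bounded exact coresolution
\[
0 \to N \to J^0 \to J^1 \to \cdots \to J^m \to 0
\]
with each $J^k \in \mathcal{I}_C$, and then splice it into the short exact sequences $0 \to N^k \to J^k \to N^{k+1} \to 0$ for $k = 0, \ldots, m-1$, where $N^0 = N$ and $N^m \cong J^m \in \mathcal{I}_C$. Applying $\Hom_R(-,M)$ and using the hypothesis $\mathcal{I}_C$-$\grd_R(M) = n$, which forces $\Ext^j_R(J^k, M) = 0$ for every $j < n$ and every $k$, the long exact sequence of $\Ext$ yields $\Ext^i_R(N^k, M) \cong \Ext^{i+1}_R(N^{k+1}, M)$ whenever both $\Ext^i_R(J^k, M)$ and $\Ext^{i+1}_R(J^k, M)$ vanish, i.e., whenever $i + 1 < n$. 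Iterating this isomorphism $m$ times gives $\Ext^i_R(N, M) \cong \Ext^{i+m}_R(N^m, M)$, and the right-hand side vanishes because $N^m \in \mathcal{I}_C$. Tracking the constraints along the chain shows that the tightest one is $i + m < n$, i.e., $i < n-m$, which is exactly the stated range.

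Part (ii) is formally dual. I would take a bounded exact $\mathcal{P}_C$-resolution $0 \to P_m \to \cdots \to P_0 \to M \to 0$ afforded by $C$-$\pd_R(M) = m$, splice it into short exact sequences $0 \to M_{k+1} \to P_k \to M_k \to 0$ (with $M_0 = M$ and $M_m \cong P_m \in \mathcal{P}_C$), apply $\Hom_R(N, -)$, and invoke the hypothesis $\mathcal{P}_C$-$\grd_R(N) = n$, which gives $\Ext^j_R(N, P_k) = 0$ for all $j < n$. The same dimension shift then delivers $\Ext^i_R(N, M) \cong \Ext^{i+m}_R(N, M_m) = 0$ whenever $i < n-m$.

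I do not foresee a substantive obstacle. The only real care lies in the bookkeeping of inequalities across the chain of $m$ dimension shifts; one has to verify that the tightest cumulative condition coming from all splices is precisely $i + m < n$, rather than being off by one. Beyond this, the argument is a mechanical generalization of the splicing technique already deployed in Lemma 3.6, and all needed existence and vanishing results are in place from Section 2.
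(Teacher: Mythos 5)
Your proof is correct and is essentially the paper's argument: both reduce the claim to $\Ext$-vanishing against the terms of a bounded $\mathcal{I}_C$-coresolution of $N$ (resp. $\mathcal{P}_C$-resolution of $M$) supplied by \cite[Corollary 2.4]{TW} and then dimension-shift, the only difference being that the paper organizes this as an induction on $m$ using a single short exact sequence $0 \to N \to L \to K \to 0$ (invoking the Auslander class and Theorem 2.7 to see that the cokernel satisfies $C$-$\id_R(K) = m-1$), whereas you splice the full length-$m$ coresolution at once, which makes that bookkeeping unnecessary. Your tracking of the constraints is also right: the binding condition comes from the last syzygy and is $i+m<n$, i.e. $i<n-m$.
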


\begin{proof}
(i). Induct on $C$-$ \id_R(N) $. The case $C$-$ \id_R(N) = 0 $ is just definition. Assume, inductively, that $C$-$ \id_R(N) = m $. Then,
by \cite[Corollary 2.4(b)]{TW}, there exists an exact sequence \\
\centerline{$ 0 \rightarrow N \rightarrow L \rightarrow K \rightarrow 0 $,}
such that $ L \in \mathcal{I}_C $ and $ K = \coker(N \rightarrow L) $. since both $ N$ and  $L $ are in $\mathcal{A}_C(R) $, we have $ K \in \mathcal{A}_C(R) $, and therefore
$ \Tor^R_1(C,K) = 0 $. On the other hand $ C \otimes_R L \in \mathcal{I}$, by \cite[Theorem 3.2.11]{EJ1}. Hence application of $ C \otimes_R - $ on above exact sequence yields an exact sequence \\
\centerline{$ 0 \rightarrow C \otimes_R N \rightarrow C \otimes_R L \rightarrow C \otimes_R K \rightarrow 0$.}
By Theorem 2.7(i), we have $ \id_R(C \otimes_R N) = m $. Therefore $ \id_R(C \otimes_R K) = m-1 $, whence $ C $-$ \id_R(K) = m-1 $. Now, induction hypothesis applied to $L$ and $K$ yields $ \Ext^{i<n}_R(L,M) = 0 $ and $ \Ext^{i<m-(n-1)}_R(K,M) = 0 $, respectively. Therefore, the long exact sequence \\
\centerline{$ \cdots \rightarrow \Ext^i_R(L,M) \rightarrow \Ext^i_R(N,M) \rightarrow \Ext^{i+1}_R(K,M) \rightarrow \cdots$}
completes the inductive step.

(ii). Is similar to (i).
\end{proof}
\begin{cor}
Let $\fa$ and $ \fb $ be two ideals of $R$. The following statements hold true.
\begin{itemize}
\item[(i)]{If $ \mathcal{I}_C $\emph{-grade}$ _R(R / \fa) > C $\emph{-\id}$ _R(R / \fb) $, then $ \fb \colon \fa = \fb $.}
\item[(ii)]{If $ \mathcal{P}_C $\emph{-grade}$ _R(R / \fa) > C $\emph{-\pd}$ _R(R / \fb) $, then $ \fb \colon \fa = \fb $.}
\item[(iii)]{One has $ \mathcal{I}_C $\emph{-grade}$ _R(R / \fa) \leq C $\emph{-\id}$ _R(R / \fp) $, for all $ \fp \in \emph{\ass}_R(\fa) $.}
\item[(iv)]{One has $ \mathcal{P}_C $\emph{-grade}$ _R(R / \fa) \leq C $\emph{-\pd}$ _R(R / \fp) $, for all $ \fp \in \emph{\ass}_R(\fa) $.}
\end{itemize}
\begin{proof}
For (i) and (iii) use Theorem 3.9(i), and for (ii) and (iv) use Theorem 3.9(ii).
\end{proof}
\end{cor}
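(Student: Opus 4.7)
The key observation is the standard identification $\Hom_R(R/\fa, R/\fb) \cong (\fb \colon \fa)/\fb$, which lets one rewrite the equality $\fb \colon \fa = \fb$ as the vanishing $\Ext_R^0(R/\fa, R/\fb) = 0$. With this in hand the entire corollary collapses to an $i=0$ application of Theorem 3.9; the only bookkeeping is to match each module to the correct slot of $\Ext$.

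For (ii) the plan is to set $N = R/\fa$ and $M = R/\fb$ and apply Theorem 3.9(ii) directly: the hypothesis $\mathcal{P}_C\emph{-}\grd_R(R/\fa) > C\emph{-}\pd_R(R/\fb)$ is exactly $n > m$ in the theorem's notation, so $\Ext_R^i(R/\fa, R/\fb) = 0$ for all $i < n-m$, in particular at $i = 0$; the colon identity then yields $\fb \colon \fa = \fb$. Part (i) is the parallel application of Theorem 3.9(i), with the $\mathcal{I}_C$/$C\emph{-}\id$ pair playing the role of the $\mathcal{P}_C$/$C\emph{-}\pd$ pair.

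For (iii) and (iv) the plan is a contrapositive argument, exploiting that any associated prime automatically supplies a nonzero $\Hom$. Given $\fp \in \ass_R(\fa)$, write $\fp = \Ann_R(x+\fa)$ for some $x \in R\setminus\fa$: the map $R/\fp \hookrightarrow R/\fa$ sending $1 \mapsto x+\fa$ witnesses $\Hom_R(R/\fp, R/\fa) \neq 0$, and the inclusion $\fa \subseteq \fp$ yields a surjection $R/\fa \twoheadrightarrow R/\fp$ witnessing $\Hom_R(R/\fa, R/\fp) \neq 0$. Assuming the strict inequality $\mathcal{I}_C\emph{-}\grd_R(R/\fa) > C\emph{-}\id_R(R/\fp)$ would then, via Theorem 3.9(i) applied with $M = R/\fa$ and $N = R/\fp$, force $\Hom_R(R/\fp, R/\fa)$ to vanish, a contradiction; this yields (iii), and Theorem 3.9(ii) applied to the surjective hom in the same way yields (iv).

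The argument is essentially routine; the only thing to guard against is keeping straight which slot of $\Ext$ carries the grade hypothesis and which carries the (co)dimension hypothesis in Theorem 3.9(i) versus (ii), since the two statements swap source and target.
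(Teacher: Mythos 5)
Your treatments of (ii), (iii) and (iv) are correct and follow exactly the paper's intended argument: the paper's proof is the same one-line appeal to Theorem 3.9, which you have merely spelled out, including the right witnesses for the nonvanishing Homs in (iii) and (iv) (the embedding $R/\fp \hookrightarrow R/\fa$ against Theorem 3.9(i), and the surjection $R/\fa \twoheadrightarrow R/\fp$ against Theorem 3.9(ii)).

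Part (i), however, has a genuine gap, and it is precisely the slot-swap you flagged as the thing to guard against but then did not check. In Theorem 3.9(i) the module carrying the $\mathcal{I}_C$-grade hypothesis occupies the \emph{second} slot of $\Ext$: taking $M=R/\fa$ and $N=R/\fb$, the theorem gives $\Ext_R^i(R/\fb,R/\fa)=0$ for $i<n-m$, so at $i=0$ you obtain $\Hom_R(R/\fb,R/\fa)=0$, i.e. $\fa\colon\fb=\fa$ --- not the vanishing of $\Hom_R(R/\fa,R/\fb)\cong(\fb\colon\fa)/\fb$ that the printed conclusion $\fb\colon\fa=\fb$ requires. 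You cannot force $R/\fa$ into the first slot from the stated hypothesis, since that would need $C$-$\id_R(R/\fa)$ finite and dominated by $\mathcal{I}_C$-$\grd_R(R/\fb)$, which is not assumed. So your ``parallel application'' actually proves $\fa\colon\fb=\fa$; this is also all that the paper's own one-line proof yields, and the printed conclusion of (i) appears to be a misstatement carried over verbatim from (ii), where the slots of Theorem 3.9(ii) do line up with $\Hom_R(R/\fa,R/\fb)$. As written, though, your argument for (i) does not establish the claim as stated: you should either derive and record the corrected conclusion $\fa\colon\fb=\fa$ or explicitly note that Theorem 3.9(i) does not give $\fb\colon\fa=\fb$.
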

\begin{thm}
	Let $M$ be a finitely generated $R$-module and let $I$ be an injective $R$-module for which $ \emph{\Hom}_R(M,I) \neq 0 $. Then \\
	\centerline{$ \mathcal{P}_C $\emph{-grade}$ _R(M) \leq \mathcal{I}_C $\emph{-grade}$ _R(\emph{\Hom}_R(M,I)) \leq C $\emph{-\pd}$ _R(M) $.}
\end{thm}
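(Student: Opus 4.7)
The plan is to reduce both inequalities to a single natural identity. Namely, for each $i\geq 0$, each pair of injective $R$-modules $E,I$ and each finitely generated $M$,
\[
\Ext_R^i(\Hom_R(C,E),\,\Hom_R(M,I))\;\cong\;\Hom_R\bigl(\Hom_R(\Ext_R^i(M,C),\,E),\,I\bigr).
\]
I would derive this in two bites. The adjunction $\Hom_R(P_\bullet\otimes B,I)\cong \Hom_R(P_\bullet,\Hom_R(B,I))$ together with exactness of $\Hom_R(-,I)$ (since $I$ is injective) gives, via a projective resolution of $A$, the standard isomorphism $\Ext_R^i(A,\Hom_R(B,I))\cong\Hom_R(\Tor_i^R(A,B),I)$. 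Taking $A=\Hom_R(C,E)$ and $B=M$ reduces the problem to computing $\Tor_i^R(M,\Hom_R(C,E))$. Since $R$ is Noetherian and $M$ is finitely generated, there is a resolution $F_\bullet\to M$ by finitely generated free modules; the elementary isomorphism $R^n\otimes_R\Hom_R(C,E)\cong\Hom_R(\Hom_R(R^n,C),E)$ and exactness of $\Hom_R(-,E)$ then yield $\Tor_i^R(M,\Hom_R(C,E))\cong\Hom_R(\Ext_R^i(M,C),E)$.

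\textbf{Left inequality.} By the remark following Definition~3.1, $\mathcal{P}_C\emph{-}\grd_R(M)=\inf\{i\geq 0:\Ext_R^i(M,C)\neq 0\}$. Hence when $i<\mathcal{P}_C\emph{-}\grd_R(M)$ one has $\Ext_R^i(M,C)=0$, and the displayed identity forces $\Ext_R^i(\Hom_R(C,E),\Hom_R(M,I))=0$ for every injective $E$. This is precisely $\mathcal{P}_C\emph{-}\grd_R(M)\leq\mathcal{I}_C\emph{-}\grd_R(\Hom_R(M,I))$.

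\textbf{Right inequality.} One may assume $m:=C\emph{-}\pd_R(M)<\infty$. Then $M\in\mathcal{B}_C(R)$ and Theorem~2.7(ii) produces a projective resolution $P_\bullet\to\Hom_R(C,M)$ of length $m$. Because $\Hom_R(C,M)\in\mathcal{A}_C(R)$, the complex $C\otimes_R P_\bullet\to M$ is an exact $\mathcal{P}_C$-resolution; applying the exact functor $\Hom_R(-,I)$ gives the exact sequence
\[
0\to \Hom_R(M,I)\to\Hom_R(C\otimes P_0,I)\to\cdots\to \Hom_R(C\otimes P_m,I)\to 0.
\]
Each term $\Hom_R(C\otimes P_j,I)\cong \Hom_R(P_j,\Hom_R(C,I))$ has the form $\Hom_R(C,I^{S_j})$ for a set $S_j$, and arbitrary products of injectives over the Noetherian ring $R$ are injective, so these terms lie in $\mathcal{I}_C(R)$. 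Thus $C\emph{-}\id_R(\Hom_R(M,I))\leq m$, and Lemma~3.3(ii) then gives $\mathcal{I}_C\emph{-}\grd_R(\Hom_R(M,I))\leq m$.

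The delicate point I expect is the Tor--Ext duality $\Tor_i^R(M,\Hom_R(C,E))\cong\Hom_R(\Ext_R^i(M,C),E)$: it fails in the absence of finite presentation, so the argument genuinely uses both the Noetherian hypothesis and finite generation of $M$ to obtain a resolution by finitely generated free modules. A secondary subtlety is that in invoking Lemma~3.3(ii) one implicitly needs the infimum defining $\mathcal{I}_C\emph{-}\grd_R(\Hom_R(M,I))$ to be attained; this is assured because $\Hom_R(M,I)\neq 0$ and the construction places it in $\mathcal{A}_C(R)$ with finite $C$-injective dimension.
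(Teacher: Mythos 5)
Your proof is correct, and it follows the paper's skeleton while differing in one half. For the left inequality you do essentially what the paper does: reduce to Remark 3.2 ($ \mathcal{P}_C\emph{-}\grd_R(M)=\inf\{i \mid \Ext_R^i(M,C)\neq 0\} $ for finitely generated $M$) and combine the dualities $ \Tor^R_i(\Hom_R(C,E),M)\cong\Hom_R(\Ext_R^i(M,C),E) $ and $ \Ext_R^i(A,\Hom_R(M,I))\cong\Hom_R(\Tor^R_i(A,M),I) $; the paper simply cites \cite[Theorems 3.2.13 and 3.2.1]{EJ1}, whereas you re-derive them, correctly flagging where finite generation of $M$ over the Noetherian ring is used. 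For the right inequality both arguments pass through the same intermediate bound $ \mathcal{I}_C\emph{-}\grd_R(\Hom_R(M,I))\leq C\emph{-}\id_R(\Hom_R(M,I))\leq C\emph{-}\pd_R(M) $, but you establish the second step differently: the paper computes $ C\emph{-}\id_R(\Hom_R(M,I))=\id_R(\Hom_R(\Hom_R(C,M),I))\leq\fd_R(\Hom_R(C,M))=\pd_R(\Hom_R(C,M))=C\emph{-}\pd_R(M) $ using Theorem 2.7 and the Hom-evaluation isomorphism \cite[Theorem 3.2.11]{EJ1}, while you build an explicit $ \mathcal{P}_C $-resolution $ C\otimes_R P_\bullet\to M $ of length $m$ (valid because $ M\in\mathcal{B}_C(R) $ and $ \Hom_R(C,M)\in\mathcal{A}_C(R) $, so the higher Tor's vanish and $ C\otimes_R\Hom_R(C,M)\cong M $) and dualize it into $I$ to obtain an $ \mathcal{I}_C $-coresolution of $ \Hom_R(M,I) $ of length $m$; this is more constructive but equivalent in content. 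Three small corrections: it is cleaner to adjoint the other way, $ \Hom_R(C\otimes_R P_j,I)\cong\Hom_R(C,\Hom_R(P_j,I)) $ with $ \Hom_R(P_j,I) $ injective (your literal description $ \Hom_R(C,I^{S_j}) $ only holds verbatim for free $P_j$, though one may of course choose $P_\bullet$ free); products of injectives are injective over any ring, so Noetherianness is not what makes that step work (it is needed for direct sums); and your closing worry about the infimum being attained is unnecessary, since Lemma 3.3(ii) is an unconditional inequality and that is all you invoke.
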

\begin{proof}
	By Remark 3.2, we have $ \mathcal{P}_C $-grade$ _R(M) = \inf\{ i \geq 0 \mid \Ext_R^i(M,C) \neq 0 \}$. Now, if $ \Ext_R^i(M,C) = 0 $, then for any injective $R$-module $E$, we have the isomorphism
	$ \Tor^R_i(\Hom_R(C,E),M) \cong \Hom_R(\Ext_R^i(M,C) ,E) = 0 $, by \cite[Theorem 3.2.13]{EJ1}. Hence, we have \\
	\centerline{$ \Ext_R^i(\Hom_R(C,E),\Hom_R(M,I)) \cong \Hom_R(\Tor^R_i(\Hom_R(C,E),M),I) = 0$.}
	Therefore the inequality $ \mathcal{P}_C $-grade$ _R(M) \leq \mathcal{I}_C $-grade$ _R(\Hom_R(M,I)) $ follows. Next,
	 if $ C $-$\pd _R(M) = \infty$, then we are done. So, we can assume that $ C $-$\pd _R(M) < \infty$. Thus $ M \in \mathcal{B}_C(R) $, and then $ \Hom_R(M,I) \in \mathcal{A}_C(R) $ by \cite[Proposition 7.2(b)]{HW}. Therefore, we have the following (in)equalities
	 \[\begin{array}{rl}
	 \mathcal{I}_C\emph{-}\grd_R(\Hom_R(M,I)) &\leq C\emph{-}\id(\Hom_R(M,I))\\
	 &= \id_R(C \otimes_R \Hom_R(M,I))\\
	 &= \id_R(\Hom_R(\Hom_R(C,M),I))\\
	 &\leq \fd(\Hom_R(C,M))\\
	 &= \pd(\Hom_R(C,M)),\\
	 &= C\emph{-}\pd(M),\\
	 \end{array}\]
in which the first inequality is from Lemma 3.3(ii), the second equality is from \cite[Theorem 3.2.11]{EJ1}, the third equality holds because $ \Hom_R(C,M) $ is finitely generated, and the other equalities are from Theorem 2.7. This completes the proof.
\end{proof}
\begin{cor}
	Let $M$ be a finitely generated $R$-module and let $E$ be an injective cogenerator. Then \\
\centerline{$ \mathcal{P}_C $\emph{-grade}$ _R(M) = \mathcal{I}_C $\emph{-grade}$ _R(\emph{\Hom}_R(M,E))$.}	
\end{cor}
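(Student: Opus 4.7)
The plan is to obtain the inequality $\leq$ from Theorem 3.11 and the reverse inequality from a double-dualization argument that exploits the cogenerator property of $E$. Since $E$ is an injective cogenerator and $M\neq 0$, we have $\Hom_R(M,E) \neq 0$, so Theorem 3.11 applied with $I = E$ immediately yields
\[
\mathcal{P}_C\emph{-}\grd_R(M) \leq \mathcal{I}_C\emph{-}\grd_R(\Hom_R(M,E)).
\]
Thus only the reverse inequality needs attention.

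For the reverse inequality, I would track, for each $i \geq 0$, the (non)vanishing of $\Ext_R^i(\Hom_R(C,E),\Hom_R(M,E))$. Standard Ext-Tor duality against the injective module $E$ gives
\[
\Ext_R^i(\Hom_R(C,E),\Hom_R(M,E)) \cong \Hom_R(\Tor_i^R(\Hom_R(C,E),M),E),
\]
and combining this with the isomorphism $\Tor_i^R(\Hom_R(C,E),M) \cong \Hom_R(\Ext_R^i(M,C),E)$ from \cite[Theorem 3.2.13]{EJ1} (exactly the one invoked in the proof of Theorem 3.11), I obtain
\[
\Ext_R^i(\Hom_R(C,E),\Hom_R(M,E)) \cong \Hom_R\bigl(\Hom_R(\Ext_R^i(M,C),E),E\bigr).
\]
Because $E$ is a cogenerator, $\Hom_R(-,E)$ reflects zero modules, so two applications of this observation show that the left-hand side vanishes if and only if $\Ext_R^i(M,C)$ does.

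Invoking Remark 3.2 then gives
\[
\mathcal{P}_C\emph{-}\grd_R(M) = \inf\{\,i \geq 0 \mid \Ext_R^i(\Hom_R(C,E),\Hom_R(M,E)) \neq 0\,\} \geq \mathcal{I}_C\emph{-}\grd_R(\Hom_R(M,E)),
\]
the last inequality being immediate from the definition of $\mathcal{I}_C$-grade, since $E$ is one admissible injective appearing in the corresponding infimum. Combined with the first inequality this yields the desired equality. I do not anticipate any real obstacle; the delicate point is simply that the cogenerator hypothesis must be used symmetrically (once at each end of the double-dualization), and that the isomorphism \cite[Theorem 3.2.13]{EJ1} requires $M$ to be finitely generated, which is part of the hypothesis.
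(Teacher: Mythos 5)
Your proof is correct and takes essentially the same route as the paper: the inequality $\leq$ comes from Theorem 3.11 with $I=E$, and the reverse inequality from the isomorphism $\Ext_R^i(\Hom_R(C,E),\Hom_R(M,E)) \cong \bigl(\Ext_R^i(M,C)\bigr)^{\vee\vee}$ obtained via \cite[Theorems 3.2.1 and 3.2.13]{EJ1} (the latter needing $M$ finitely generated), combined with Remark 3.2 and the cogenerator property of $E$. The only cosmetic difference is that the paper uses the embedding $\Ext_R^i(M,C)\hookrightarrow \Ext_R^i(M,C)^{\vee\vee}$ where you apply twice the fact that $\Hom_R(-,E)$ reflects zero modules, which is the same use of the hypothesis.
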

\begin{proof}
in view of Theorem 3.11, we need only to prove the inequality '$ \geq $'. Assume that $ i $ is a non-negative integer for which $ \Ext_R^i(\Hom_R(C,I),\Hom_R(M,E)) = 0 $ for any injective $R$-module $I$. Thus, in particular, $ \Ext_R^i(\Hom_R(C,E),\Hom_R(M,E)) = 0 $. Set $ (-)^{\vee} = \Hom_R(-,E)$. Then we have
	\[\begin{array}{rl}
	\Ext_R^i(\Hom_R(C,E),\Hom_R(M,E)) &= \Ext_R^i(C^{\vee},M^{\vee})\\
	&\cong \Tor^R_i(C^{\vee} , M)^{\vee} \\
	&\cong \Ext_R^i(M , C)^{\vee \vee}, \\
	\end{array}\]
	in which the first isomorphism is from \cite[Theorem 3.2.1]{EJ1}, and the second isomorphism is from \cite[Theorem 3.2.13]{EJ1}. Now, since
	$ \Ext_R^i(M , C) \hookrightarrow \Ext_R^i(M , C)^{\vee \vee} $, the result follows.
\end{proof}
\begin{cor}
Let $ (R, \fm) $ be local with $ \emph{\dim}(R) = n $. The following are equivalent:
	\begin{itemize}
		\item[(i)]{$R$ is Cohen-Macaulay.}
		\item[(ii)]{One has $ \mathcal{P}_C\emph{-\grd}_R(R/ \fm) = n $.}
		\item[(iii)]{One has $ \mathcal{I}_C\emph{-\grd}_R(R/ \fm) = n$.}
	\end{itemize}
\end{cor}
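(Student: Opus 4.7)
The plan is to reduce the three conditions to a single classical invariant, namely $\depth(R)$, by using Remark 3.2 together with Corollary 3.12.

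First I would dispose of (i)$\iff$(ii). Since $R/\fm$ is finitely generated and $\Supp_R(R/\fm) = \{\fm\}$, the chain of equalities recorded in Remark 3.2 collapses to
\[
\mathcal{P}_C\text{-}\grd_R(R/\fm) \;=\; \inf\{\depth(R_{\fp}) \mid \fp \in \Supp_R(R/\fm)\} \;=\; \depth(R).
\]
Since $\depth(R) \leq \dim(R) = n$ in general, condition (ii) simply says $\depth(R) = n$, which is the definition of Cohen--Macaulayness. This gives (i)$\iff$(ii).

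Next, I would handle (ii)$\iff$(iii) by applying Corollary 3.12 to the finitely generated module $M = R/\fm$ and the injective $R$-module $E := E(R/\fm)$. The module $E(R/\fm)$ is an injective cogenerator over the local Noetherian ring $(R,\fm)$: for any nonzero $R$-module $N$, any nonzero cyclic submodule $Rx \cong R/\Ann(x)$ admits a nonzero map to $R/\fm \hookrightarrow E(R/\fm)$, which extends to a nonzero map $N \to E(R/\fm)$ by injectivity. One then computes
\[
\Hom_R\!\bigl(R/\fm,\; E(R/\fm)\bigr) \;\cong\; \Soc\bigl(E(R/\fm)\bigr) \;\cong\; R/\fm,
\]
so Corollary 3.12 yields
\[
\mathcal{P}_C\text{-}\grd_R(R/\fm) \;=\; \mathcal{I}_C\text{-}\grd_R\!\bigl(\Hom_R(R/\fm, E(R/\fm))\bigr) \;=\; \mathcal{I}_C\text{-}\grd_R(R/\fm).
\]
Hence (ii) and (iii) are equivalent, completing the proof.

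There is no real obstacle here: both implications are direct consequences of results already established in the paper. The only points that require a moment's care are verifying that $E(R/\fm)$ is genuinely an injective cogenerator in the sense of Section 2 (so that Corollary 3.12 is applicable) and identifying $\Hom_R(R/\fm, E(R/\fm))$ with the socle of $E(R/\fm)$, both of which are standard facts about injective hulls over local Noetherian rings.
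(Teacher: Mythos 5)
Your proof is correct and uses essentially the same ingredients as the paper: the chain of equalities in Remark 3.2 (giving $\mathcal{P}_C\text{-}\grd_R(R/\fm)=\depth(R)$) and Corollary 3.12 applied with $E=E(R/\fm)$ together with $\Hom_R(R/\fm,E(R/\fm))\cong R/\fm$. The only organizational difference is that the paper closes the loop by a separate Matlis-duality computation for (iii)$\Rightarrow$(i), whereas you avoid that step by invoking Corollary 3.12 as a two-way equality and reading off Cohen--Macaulayness from $\depth(R)=n$; both are sound.
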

\begin{proof}
 (i)$\Longrightarrow$(ii). In this case, by Proposition 2.4(iv), we have $ \depth_R(C) = n $. Hence, according to Remark 3.2, we have \\
 \centerline{$ \mathcal{P}_C $-$\grd _R(R/ \fm) = \inf\{ i \geq 0 \mid \Ext_R^i(R/ \fm,C) \neq 0 \} = n $.}

 (ii)$\Longrightarrow$(iii). Use Corollary 3.12, the isomorphism $ \Hom_R(R/\fm , E(R/\fm)) \cong R/\fm $, and the fact that $ E(R/\fm) $ is an injective cogenerator.

 (iii)$\Longrightarrow$(i). Set  $ (-)^{\vee} = \Hom_R(-,E(R/\fm))$. By assumption, $ \Ext_R^i(C^{\vee},R/\fm) = 0 $ for
  all $ 0 \leq i < n$. There are isomorphisms
  	\[\begin{array}{rl}
  	\Ext_R^i(C^{\vee},R/\fm) &\cong \Ext_R^i(C^{\vee},(R/\fm)^{\vee})\\
  	&\cong \Tor^R_i(C^{\vee} , R/\fm)^{\vee} \\
  	&\cong \Ext_R^i(R/\fm , C)^{\vee \vee}, \\
  	\end{array}\]
  	where the second isomorphism is from \cite[Theorem 3.2.1]{EJ1}, and the third isomorphism is from \cite[Theorem 3.2.13]{EJ1}. Hence
  	 $ \Ext_R^i(R/\fm , C) = 0 $ for all $ 0 \leq i < n$, from which we conclude that \\
 \centerline{$ n = \mathcal{I}_C $-$\grd _R(R/ \fm) \leq \depth_R(C) = \depth(R) \leq n $.}
 Therefore $R$ is Cohen-Macaulay.
\end{proof}
\begin{thm}
	Let $E$ be an injective cogenerator. The following are equivalent:
	\begin{itemize}
		\item[(i)]{$C$ is pointwise dualizing.}
		\item[(ii)]{One has $ \mathcal{P}_C\emph{-\grd}_R(\emph{\Ext}^i_R(M,C)) \geq i$ for all finitely generated $R$-modules $M$ and all $ i \geq 0 $.}
		\item[(iii)]{One has $ \mathcal{I}_C\emph{-\grd}_R(\emph{\Tor}^i_R(M,\emph{\Hom}_R(C,E))) \geq i$ for all finitely generated $R$-modules $M$ and all $ i \geq 0 $.}
	\end{itemize}	
\end{thm}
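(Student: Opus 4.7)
My plan is to prove the equivalence via the cycle (i)$\Rightarrow$(ii), (ii)$\iff$(iii), (ii)$\Rightarrow$(i). A recurring reformulation is that, since $M$ is finitely generated, so is $\Ext^i_R(M,C)$, and Remark 3.2 gives
\[
\mathcal{P}_C\emph{-}\grd_R(\Ext^i_R(M,C)) = \inf\{\depth R_\fp : \fp \in \Supp_R \Ext^i_R(M,C)\}.
\]
Thus condition (ii) translates to: $\depth R_\fp \geq i$ whenever $\fp \in \Supp_R \Ext^i_R(M,C)$.

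For (i)$\Rightarrow$(ii), if $C$ is pointwise dualizing then for every $\fp \in \Spec R$ the localization $C_\fp$ is dualizing over $R_\fp$ (pick a maximal ideal above $\fp$ and localize further, using Proposition 2.4(ii) and that localization preserves finite injective dimension). Then $R_\fp$ is Cohen--Macaulay and $\id_{R_\fp}(C_\fp) = \depth R_\fp$. For any $\fp \in \Supp_R \Ext^i_R(M,C)$, the localized $\Ext^i_{R_\fp}(M_\fp,C_\fp)$ is nonzero, which forces $i \leq \depth R_\fp$, yielding the reformulated (ii).

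For (ii)$\iff$(iii), I would first establish the natural isomorphism $\Tor^R_i(M,\Hom_R(C,E)) \cong \Hom_R(\Ext^i_R(M,C),E)$ for finitely generated $M$: using a resolution $F_\bullet \to M$ by finitely generated free modules, the natural map $F_n \otimes_R \Hom_R(C,E) \to \Hom_R(\Hom_R(F_n,C),E)$ is an isomorphism, and injectivity of $E$ lets $\Hom_R(-,E)$ commute with homology. Then Corollary 3.12, applied to the finitely generated module $N := \Ext^i_R(M,C)$, gives
\[
\mathcal{P}_C\emph{-}\grd_R(N) = \mathcal{I}_C\emph{-}\grd_R(\Hom_R(N,E)) = \mathcal{I}_C\emph{-}\grd_R(\Tor^R_i(M,\Hom_R(C,E))),
\]
identifying (ii) and (iii).

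For (ii)$\Rightarrow$(i), fix $\fm \in \Max(R)$; it suffices to show $\id_{R_\fm}(C_\fm) < \infty$. Apply (ii) with $M = R/\fp$ for each prime $\fp \subseteq \fm$. The localization $(\Ext^i_R(R/\fp,C))_\fp \cong \Ext^i_{R_\fp}(k(\fp),C_\fp)$ has $k(\fp)$-dimension equal to the Bass number $\mu^i(\fp,C)$. So $\mu^i(\fp,C) \neq 0$ implies $\fp \in \Supp_R \Ext^i_R(R/\fp,C)$, and the reformulated (ii) forces $\depth R_\fp \geq i$; equivalently, $\mu^i(\fp,C) = 0$ for all $i > \depth R_\fp$. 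Since $\mu^i(\fp R_\fm, C_\fm) = \mu^i(\fp,C)$ for $\fp \subseteq \fm$, and $\id_{R_\fm}(C_\fm)$ equals the supremum of degrees where some Bass number is nonzero, we conclude
\[
\id_{R_\fm}(C_\fm) \leq \sup\{\depth R_\fp : \fp \subseteq \fm\} \leq \dim R_\fm < \infty,
\]
so $C_\fm$ is dualizing. The main obstacle is this last step: one must vary $M$ over all quotients $R/\fp$ (not merely $R/\fm$) to reach every Bass number of $C_\fm$, and rely on the Bass-number characterization of injective dimension to pass from pointwise vanishing to finiteness of $\id_{R_\fm}(C_\fm)$.
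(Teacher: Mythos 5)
Your proposal is correct, and it follows the same overall skeleton as the paper's proof: the cycle (i)$\Rightarrow$(ii), (ii)$\Leftrightarrow$(iii), (ii)$\Rightarrow$(i), with the middle equivalence obtained exactly as in the paper from Corollary 3.12 together with the duality $\Tor^R_i(M,\Hom_R(C,E))\cong\Hom_R(\Ext^i_R(M,C),E)$ applied to the finitely generated module $\Ext^i_R(M,C)$. The outer implications, however, are executed differently. For (i)$\Rightarrow$(ii) the paper passes through the annihilator $\fa=\Ann_R(\Ext^i_R(M,C))$ and uses Cohen--Macaulayness to get $\grd(\fa,C)=\h(\fa)\geq i$, whereas you localize at a prime of the support and invoke Bass's equality $\id_{R_\fp}(C_\fp)=\depth R_\fp$ after noting that localizations of a pointwise dualizing module are dualizing; this is essentially the same computation, repackaged through Remark 3.2 in the form $\mathcal{P}_C$-$\grd=\inf\{\depth R_\fp : \fp\in\Supp\}$. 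The genuine divergence is in (ii)$\Rightarrow$(i): the paper applies (ii) to \emph{every} finitely generated $M$, uses that the grade of a proper ideal of $R_\fm$ on $C_\fm$ is at most $\h(\fm)$ to force $\Ext^i_R(M,C)_\fm=0$ for all $i>\h(\fm)$, and then reads off $\id_{R_\fm}(C_\fm)\leq\h(\fm)$ from Ext-vanishing against all finitely generated modules; you instead test only against the cyclic modules $R/\fp$ with $\fp\subseteq\fm$, identify $\Ext^i_R(R/\fp,C)_\fp$ with the Bass number $\mu^i(\fp,C)$, use that Bass numbers localize, and conclude via the Bass-number characterization of injective dimension that $\id_{R_\fm}(C_\fm)\leq\dim R_\fm<\infty$. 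Your route is more economical in its class of test modules but relies on the minimal-injective-resolution machinery, while the paper's argument stays entirely within the grade formalism it has already developed; both arguments are complete and yield the same bound $\id_{R_\fm}(C_\fm)\leq\dim R_\fm$, so $C_\fm$ is dualizing as required.
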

\begin{proof}
	(i)$\Longrightarrow$(ii). In this case, $R$ is Cohen-Macaulay. For any $ \fp \in \Spec(R) $ with $ \h(\fp) < i $, we have $ \Ext^i_R(M,C)_{\fp} \cong \Ext^i_{R_{\fp}}(M_{\fp},C_{\fp}) = 0 $, since
	$ \id_{R_{\fp}}(C_{\fp}) = \h(\fp) $. Hence, if we set $ \fa = \Ann_R(\Ext^i_R(M,C)) $, then we have \\
	\centerline{$ \grd(\fa , C) =  \h_C(\fa) = \h(\fa) \geq i$.}
	Now, since $\Ext^i_R(M,C)$ is finitely generated, we have
	\[\begin{array}{rl}
	\mathcal{P}_C\emph{-}\grd_R(\Ext^i_R(M,C)) &= \inf \{j \geq 0 \mid \Ext^j_R(\Ext^i_R(M,C),C) \neq 0\}\\
	&= \inf \{j \geq 0 \mid \Ext^j_R(R/ \fa,C) \neq 0\}\\
	&\geq i.\\
	\end{array}\]
	(ii)$\Longleftrightarrow$(iii). By using Corollary 3.12 and the isomorphism \cite[Theorem 3.2.13]{EJ1}, we have
	\[\begin{array}{rl}
	\mathcal{P}_C\emph{-}\grd_R(\Ext^i_R(M,C)) &= \mathcal{I}_C\emph{-}\grd_R\big(\Hom_R(\Ext^i_R(M,C),E \big)\\
	&=  \mathcal{I}_C\emph{-}\grd_R\big(\Tor_i^R(M,\Hom_R(C,E)\big).\\
	\end{array}\]

	(ii)$\Longrightarrow$(i). Assume that $\fm$ is a maximal ideal of $R$. For any finitely generated $R$-module $M$ and any $ i \neq 0 $, we set
	$ \fa_M^i= \Ann_R(\Ext^i_R(M,C))$. Note that $ \grd_{R_{\fm}}(\fa_M^iR_{\fm} , C_{\fm}) < \infty $ since $R_{\fm}$ is Noetherian. On the other hand, by assumption, we have \\
	\centerline{$ \inf \{j \geq 0 \mid \Ext^j_R(R/ \fa_M^i ,C) \neq 0\} \geq i $,}
	for any finitely generated $R$-module $M$ and for any $ i \geq 0 $. Hence we have  \\
	\centerline{$ \inf \{j \geq 0 \mid \Ext^j_{R_{\fm}}(R_{\fm}/ \fa_M^iR_{\fm} ,C_{\fm}) \neq 0\} \geq i $,}
	for any finitely generated $R$-module $M$ and for any $ i \geq 0 $. It follows that $ \fa_M^iR_{\fm} = R_{\fm} $ for all $ i > \h(\fm) $, whence $ \Ext^i_{R_{\fm}}(M_{\fm},C_{\fm}) \cong \Ext^i_R(M,C)_{\fm} = 0 $ for all  $ i > \h(\fm) $. Therefore, since $M$ was arbitrary, we get $ \id_{R_{\fm}}(C_{\fm}) < \infty $, as wanted.
\end{proof}
Assume that $ (R,\fm) $ is local. Recall that the \textit{depth} of a
(not necessarily finitely generated) $ R $-module $ M $ is \\
\centerline{$ \depth_R(M) = \inf\{ i \geq 0 \mid \Ext_R^i(R/\fm,M) \neq 0 \} $.}
Also the \textit{width} of $M$ is defined to be \\
\centerline{$ \width_R(M) = \inf\{ i \geq 0 \mid \Tor^R_i(R/\fm,M) \neq 0 \} $.}
\begin{thm}
	Let $ (R,\fm) $ be local and $M$ be a non-zero $R$-module.
	\begin{itemize}
		\item[(i)]{One has $ \emph{\depth}(R) \leq \emph{\depth}_R(M) + C\emph{-\pd}_R(M) $.}
		\item[(ii)]{One has $ \emph{\depth}(R) \leq \emph{\width}_R(M) + C\emph{-\id}_R(M) $.}
	\end{itemize}
\end{thm}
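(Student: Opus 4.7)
Both inequalities are vacuous when the relevant relative dimension is infinite, so we may assume $C$-$\pd_R(M) < \infty$ in (i) and $C$-$\id_R(M) < \infty$ in (ii). The strategy for each part is an induction on that dimension, built on the short exact sequences coming from a $\mathcal{P}_C$-resolution (resp.\ an $\mathcal{I}_C$-coresolution) of $M$, together with the standard depth and width lemmas for short exact sequences coming from the $\Ext_R^\ast(R/\fm,-)$ and $\Tor^R_\ast(R/\fm,-)$ long exact sequences.

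For (i), the base case is $M \in \mathcal{P}_C$, so $M = C\otimes_R P$ for some projective $P$. Realizing $P$ as a direct summand of a free module $F=\bigoplus_I R$ exhibits $M$ as a direct summand of $\bigoplus_I C$; since $R/\fm$ is finitely presented, $\Ext^j_R(R/\fm,\bigoplus_I C) \cong \bigoplus_I \Ext^j_R(R/\fm,C)$, and together with Proposition~2.4(iv) this gives $\depth_R(\bigoplus_I C)=\depth(R)$. The elementary inequality $\depth_R(A)\geq \depth_R(A\oplus B)$ then yields $\depth_R(M)\geq \depth(R)$. For the inductive step with $n = C$-$\pd_R(M)>0$, take the initial short exact sequence $0\to K \to X \to M \to 0$ of a $\mathcal{P}_C$-resolution, so $X\in \mathcal{P}_C$ and $C$-$\pd_R(K)\leq n-1$; since $M,X\in \mathcal{B}_C(R)$, the two-out-of-three property of $\mathcal{B}_C$ puts $K$ in $\mathcal{B}_C(R)$. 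The induction hypothesis on $K$, the base case on $X$, and the depth lemma give
\[
\depth_R(M)\ \geq\ \min\{\depth_R(K)-1,\ \depth_R(X)\}\ \geq\ \depth(R)-n.
\]

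For (ii) the outline is parallel. The base case is $M=\Hom_R(C,I)$ for an injective $I$, and here I would invoke the tensor-Hom swap
\[
\Tor^R_i(R/\fm,\Hom_R(C,I))\ \cong\ \Hom_R(\Ext^i_R(R/\fm,C),I),
\]
valid because $R/\fm$ is finitely presented and $I$ is injective (this is \cite[Theorem 3.2.13]{EJ1}, already used for the same type of swap in the proof of Theorem~3.11). Since $\Ext^i_R(R/\fm,C)=0$ for $i<\depth_R(C)=\depth(R)$, this forces $\width_R(M)\geq \depth(R)$. In the inductive step with $n=C$-$\id_R(M)>0$, the short exact sequence $0\to M \to Y \to L \to 0$ extracted from the start of an $\mathcal{I}_C$-coresolution has $Y\in\mathcal{I}_C$ and $C$-$\id_R(L)\leq n-1$, while $L\in\mathcal{A}_C(R)$ by two-out-of-three on $\mathcal{A}_C$. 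The width lemma, obtained dually to the depth lemma from the long exact $\Tor$-sequence, yields
\[
\width_R(M)\ \geq\ \min\{\width_R(Y),\ \width_R(L)-1\}\ \geq\ \depth(R)-n.
\]

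The only non-routine step is the base case of (ii), which rests on the $\Tor$--$\Ext$ duality against the injective module $I$; everything else is a dévissage combining stability properties already recorded in the paper (closure of $\mathcal{A}_C$, $\mathcal{B}_C$ under two-out-of-three, and the inclusions $\mathcal{P}_C\subseteq \mathcal{B}_C$, $\mathcal{I}_C\subseteq \mathcal{A}_C$) with elementary depth/width arithmetic.
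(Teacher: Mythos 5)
Your argument is correct, but it takes a more self-contained route than the paper. The paper proves both inequalities in a few lines by specializing its Theorem 3.9 to $N=R/\fm$: Remark 3.2 gives $\mathcal{P}_C\mbox{-}\grd_R(R/\fm)=\depth(R)$, so Theorem 3.9(ii) yields $\Ext^i_R(R/\fm,M)=0$ for $i<\depth(R)-C\mbox{-}\pd_R(M)$, which is (i); for (ii) it uses Corollary 3.12 to get $\mathcal{I}_C\mbox{-}\grd_R(R/\fm)=\depth(R)$, applies Theorem 3.9(i) to obtain the vanishing of $\Ext^i_R(M,R/\fm)$, and then converts this into Tor-vanishing via Matlis duality $\Ext^i_R(M,R/\fm)\cong\Hom_R(\Tor^R_i(M,R/\fm),E(R/\fm))$ to bound the width. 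You instead bypass Theorem 3.9 and redo the underlying d\'evissage directly: an induction along the initial short exact sequences of $\mathcal{P}_C$-resolutions (resp.\ $\mathcal{I}_C$-coresolutions), with explicit base cases --- $\depth_R(C\otimes_R P)\geq\depth(R)$ via commutation of $\Ext_R^\ast(R/\fm,-)$ with direct sums and Proposition 2.4(iv), and $\width_R(\Hom_R(C,I))\geq\depth(R)$ via the swap $\Tor^R_i(R/\fm,\Hom_R(C,I))\cong\Hom_R(\Ext^i_R(R/\fm,C),I)$ of \cite[Theorem 3.2.13]{EJ1} --- combined with the standard depth/width lemmas for short exact sequences. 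This is essentially the same induction that proves the paper's Theorem 3.9, specialized to $R/\fm$, and in (ii) you work with $\Tor^R_\ast(R/\fm,-)$ from the start rather than passing through $\Ext^i_R(M,R/\fm)$ and Matlis duality. What the paper's proof buys is brevity, since the grade computations and Theorem 3.9 are already in place; what yours buys is independence from that machinery (only $\depth_R(C)=\depth(R)$ and the Tor--Ext swap are needed), at the cost of repeating the d\'evissage. The side remarks about $K\in\mathcal{B}_C(R)$ and $L\in\mathcal{A}_C(R)$ are harmless but not needed for the depth/width estimates.
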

\begin{proof}
	(i). We can assume that $ C\emph{-}\pd_R(M) < \infty $, since otherwise we have nothing to prove. We have $ \mathcal{P}_C\emph{-}\grd_R(R/ \fm) =  \mathcal{P}\emph{-}\grd_R(R/ \fm) = \depth(R)$ by Remark 3.2. Hence
	we have $ \Ext_R^i(R/\fm,M) = 0 $ for all $ i < \depth(R) - C\emph{-}\pd_R(M) $ by Theorem 3.9(ii). It follows that
	$ \depth(R) \leq \depth_R(M) + C\emph{-}\pd_R(M) $.
	
	(ii). We can assume that $ C\emph{-}\id_R(M) < \infty $, since otherwise we have nothing to prove. We have $ \mathcal{I}_C\emph{-}\grd_R(R/ \fm) =  \mathcal{P}_C\emph{-}\grd_R(R/ \fm) = \depth(R)$ by Corollary 3.12. Hence
	we have $ \Ext_R^i(M, R/\fm) = 0 $ for all $ i < \depth(R) - C\emph{-}\id_R(M) $ by Theorem 3.9(i). But there are isomorphisms
	\[\begin{array}{rl}
	\Ext_R^i(M,R/\fm) &\cong \Ext_R^i(M,\Hom_R(R/\fm,E(R/\fm)))\\
	&\cong \Hom_R(\Tor^R_i(M , R/\fm), E(R/\fm)), \\
	\end{array}\]
	in which the second equality is from \cite[Theorem 3.2.1]{EJ1}. Hence $ \Tor^R_i(M , R/\fm) = 0 $ for all $ i < \depth(R) - C\emph{-}\id_R(M) $, whence $ \depth(R) \leq \width_R(M) + C\emph{-}\id_R(M) $.
\end{proof}
\begin{lem}
	Let $M$ be a finitely generated $R$-module and let $I$ be an injective $R$-module for which $ \emph{\Hom}_R(M,I) \neq 0 $.
	\begin{itemize}
		\item[(i)]{If $M$ is $ n $-$ C $-perfect, then $ \emph{\Hom}_R(M,I) $ is $ n $-$ C $-coperfect.}
		\item[(ii)]{If $I$ is an injective cogenerator, then the converse of \emph{(i)} holds true.}
	\end{itemize}	
\end{lem}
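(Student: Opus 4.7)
The plan is to deduce both parts directly from the machinery built up in the excerpt, namely Theorem 3.11, Lemma 3.3(ii), Corollary 3.12, and Lemma 2.8(i); no fresh computation with relative resolutions is needed.

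For part (i), suppose $M$ is $n$-$C$-perfect, so that $\mathcal{P}_C\emph{-}\grd_R(M) = C\emph{-}\pd_R(M) = n$. Theorem 3.11 immediately sandwiches
$$ n = \mathcal{P}_C\emph{-}\grd_R(M) \leq \mathcal{I}_C\emph{-}\grd_R(\Hom_R(M,I)) \leq C\emph{-}\pd_R(M) = n, $$
which forces $\mathcal{I}_C\emph{-}\grd_R(\Hom_R(M,I)) = n$. It remains to match this with $C\emph{-}\id_R(\Hom_R(M,I))$. The lower bound $n \leq C\emph{-}\id_R(\Hom_R(M,I))$ is Lemma 3.3(ii). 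For the upper bound, I will reuse the middle of the chain in the proof of Theorem 3.11: applying Theorem 2.7(i), \cite[Theorem 3.2.11]{EJ1}, the general inequality $\id_R(\Hom_R(N,I)) \leq \fd_R(N)$ for injective $I$, finite generation of $\Hom_R(C,M)$, and Theorem 2.7(ii), one obtains $C\emph{-}\id_R(\Hom_R(M,I)) \leq C\emph{-}\pd_R(M) = n$. Note this chain uses only that $I$ is injective, not that it is a cogenerator, so the argument is valid here. Combining both bounds gives $C\emph{-}\id_R(\Hom_R(M,I)) = n$, and hence $\Hom_R(M,I)$ is $n$-$C$-coperfect.

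For part (ii), assume $I$ is an injective cogenerator and that $\Hom_R(M,I)$ is $n$-$C$-coperfect. Since $M$ is finitely generated, Corollary 3.12 yields
$$ \mathcal{P}_C\emph{-}\grd_R(M) = \mathcal{I}_C\emph{-}\grd_R(\Hom_R(M,I)) = n, $$
while Lemma 2.8(i) (which is precisely where the cogenerator hypothesis enters) gives
$$ C\emph{-}\pd_R(M) = C\emph{-}\id_R(\Hom_R(M,I)) = n. $$
Thus $\mathcal{P}_C\emph{-}\grd_R(M) = C\emph{-}\pd_R(M) = n$, so $M$ is $n$-$C$-perfect.

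There is no real obstacle in this proof; the only subtle point is to notice that the direction of part (i) needs the upper estimate $C\emph{-}\id_R(\Hom_R(M,I)) \leq C\emph{-}\pd_R(M)$ without a cogenerator assumption on $I$, which is why I must extract it from the internal chain of (in)equalities in the proof of Theorem 3.11 rather than invoking Lemma 2.8(i). Part (ii), by contrast, genuinely depends on Lemma 2.8(i), and this makes transparent why the converse direction requires the extra hypothesis that $I$ be an injective cogenerator.
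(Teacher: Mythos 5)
Your proof is correct and follows essentially the same route as the paper: part (i) is the paper's chain of (in)equalities (Theorem 3.11, Lemma 3.3(ii), Theorem 2.7 together with Hom-evaluation and finiteness of $\Hom_R(C,M)$) sandwiching both invariants between $n$ and $n$, merely split into two steps, and part (ii) is exactly the paper's hint via Corollary 3.12 and Lemma 2.8(i). No issues.
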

\begin{proof}
(i). Assume that  $M$ is $ n $-$ C $-perfect. Then $ C $-$ \pd_R(M) = n $, and hence $ M \in \mathcal{B}_C(R) $. Therefore, by \cite[Proposition 7.2(b)]{HW},  we have $ \Hom_R(M,I) \in \mathcal{A}_C(R) $. Hence we have the (in)equalities
\[\begin{array}{rl}
\mathcal{P}_C\emph{-}\grd_R(M) &\leq \mathcal{I}_C\emph{-}\grd_R(\Hom_R(M,I))\\
&\leq C\emph{-}\id_R(\Hom_R(M,I))\\
&= \id_R(C \otimes_R \Hom_R(M,I))\\
&= \id_R(\Hom_R(\Hom_R(C,M),I))\\
&\leq \fd(\Hom_R(C,M))\\
&= \pd(\Hom_R(C,M))\\
&= C\emph{-}\pd(M),\\
\end{array}\]
in which the first inequality is from Theorem 3.11, the second inequality is from Lemma 3.3(ii), and the first and the last equalities are from Theorem 2.7. Hence $ \Hom_R(M,I) $ is $ n $-$ C $-coperfect.

(ii). Use Lemma 2.8(i) and Corollary 3.12.
\end{proof}
\begin{cor}
	Let $ (R, \fm) $ be local with $ \emph{\dim}(R) = n $. The following are equivalent:
	\begin{itemize}
		\item[(i)]{$R$ is regular.}
		\item[(ii)]{$ R / \fm $ is n-$ C $-perfect.}
		\item[(iii)]{$ R / \fm $ is n-$ C $-coperfect.}
	\end{itemize}
\end{cor}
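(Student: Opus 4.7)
The plan is to exploit the fact that both $C \otimes_R R/\fm \cong (R/\fm)^{\mu(C)}$ and $\Hom_R(C, R/\fm) \cong \Hom_{R/\fm}(C/\fm C, R/\fm) \cong (R/\fm)^{\mu(C)}$ are nonzero finite direct sums of copies of the residue field, where $\mu(C) \geq 1$ is the minimal number of generators of $C$. Combined with Theorem 2.7, these identifications translate every occurrence of $C\emph{-}\pd$ or $C\emph{-}\id$ applied to $R/\fm$ into the ordinary $\pd$ or $\id$ of $R/\fm$ itself, and in particular show these dimensions are equal.

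For (i)$\Rightarrow$(ii) and (i)$\Rightarrow$(iii), I would assume $R$ is regular. Then $R$ is Cohen--Macaulay, so Corollary 3.13 yields $\mathcal{P}_C\emph{-}\grd_R(R/\fm) = n = \mathcal{I}_C\emph{-}\grd_R(R/\fm)$. On the other hand, Theorem 2.7(ii) together with the isomorphism above gives $C\emph{-}\pd_R(R/\fm) = \pd_R(\Hom_R(C, R/\fm)) = \pd_R(R/\fm) = n$ by Auslander--Buchsbaum--Serre, while Theorem 2.7(i) gives $C\emph{-}\id_R(R/\fm) = \id_R(C \otimes_R R/\fm) = \id_R(R/\fm) = n$, the last equality following from the Bass formula applied to $R/\fm$, which has finite injective dimension in a regular local ring. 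This confirms both (ii) and (iii).

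For the converses, suppose $R/\fm$ is either $n$-$C$-perfect or $n$-$C$-coperfect; then, respectively, $C\emph{-}\pd_R(R/\fm) < \infty$ or $C\emph{-}\id_R(R/\fm) < \infty$. The isomorphisms above together with Theorem 2.7 then force $\pd_R(R/\fm) < \infty$ or $\id_R(R/\fm) < \infty$. In the projective case, Auslander--Buchsbaum--Serre immediately gives that $R$ is regular. For the injective case one appeals to the classical fact that $\id_R(R/\fm) < \infty$ already forces $R$ to be regular: indeed, if not, $\pd_R(R/\fm) = \infty$ makes all Betti numbers $\beta_i(R/\fm)$ nonzero, so $\Ext^i_R(R/\fm, R/\fm) \cong (R/\fm)^{\beta_i} \neq 0$ for every $i$, contradicting finite injective dimension.

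The only real subtlety is invoking this last classical result (finite injective dimension of the residue field implies regularity), which does not follow from Bass's Cohen--Macaulay conjecture alone; the remainder is bookkeeping with the Foxby--Takahashi--White change-of-ring formulas of Theorem 2.7 and the already-established Corollary 3.13.
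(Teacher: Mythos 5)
Your proposal is correct, but it is organized differently from the paper's proof and leans on partly different inputs. The paper proves the cycle (i)$\Rightarrow$(ii)$\Rightarrow$(iii)$\Rightarrow$(i): for (i)$\Rightarrow$(ii) it invokes Christensen's theorem that the only semidualizing module over a regular local ring is $R$ itself, so the claim collapses to the classical statement $\mathcal{P}\text{-}\grd_R(R/\fm)=n=\pd_R(R/\fm)$; for (ii)$\Rightarrow$(iii) it applies the Matlis-duality Lemma 3.16 together with $\Hom_R(R/\fm,E(R/\fm))\cong R/\fm$; and its (iii)$\Rightarrow$(i) is essentially your argument, using that $C\otimes_R R/\fm$ is a nonzero finite-dimensional $R/\fm$-vector space so that $\id_R(R/\fm)=C\text{-}\id_R(R/\fm)=n$, and then quoting Bruns--Herzog, Exercise 3.1.26, for the fact that $\id_R(R/\fm)<\infty$ forces regularity (you instead give the minimal-free-resolution argument, which is a valid proof of that exercise). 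You prove the two equivalences (i)$\Leftrightarrow$(ii) and (i)$\Leftrightarrow$(iii) directly, using $\Hom_R(C,R/\fm)\cong(R/\fm)^{\mu(C)}\cong C\otimes_R R/\fm$ with $\mu(C)\geq 1$ to convert $C\text{-}\pd_R(R/\fm)$ and $C\text{-}\id_R(R/\fm)$ into the ordinary $\pd_R(R/\fm)$ and $\id_R(R/\fm)$ via Theorem 2.7, and Corollary 3.13 for the grade computations. What this buys: you avoid both Christensen's uniqueness result and Lemma 3.16, and the argument treats a general $C$ uniformly; the cost is an appeal to Auslander--Buchsbaum--Serre for (ii)$\Rightarrow$(i) and to the Bass formula for $\id_R(R/\fm)=\depth(R)=n$ in (i)$\Rightarrow$(iii), both standard, so nothing is lost. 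All steps check out, and there is no circularity in citing Corollary 3.13, which precedes this statement.
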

\begin{proof}
	(i)$\Longrightarrow$(ii). In this case, by \cite[Corollary 8.6]{C}, we have $ C = R $, and then \\
	\centerline{$ \mathcal{P}$-$\grd _R(R/ \fm) = \inf\{ i \geq 0 \mid \Ext_R^i(R/ \fm,R) \neq 0 \} = n = \pd_R(R/ \fm) $.}
	
	(ii)$\Longrightarrow$(iii). Use Lemma 3.16, the isomorphism $ \Hom_R(R/\fm , E(R/\fm)) \cong R/\fm $, and the fact that $ E(R/\fm) $ is an injective cogenerator.
	
	(iii)$\Longrightarrow$(i). Note that $ C \otimes_R R/\fm $ is a finite dimensional vector space over $ R/\fm $. Therefore, by Theorem 2.7(i) and the assumption, we have \\
	\centerline{$ \id_R(R/\fm) = \id_R(C \otimes_R R/\fm) = C\emph{-}\id_R(R/\fm) = n $.}
	Hence $R$ is regular by \cite[Exercise 3.1.26]{He}.
\end{proof}
\begin{thm}
	Let $ (R, \fm) $ be a Cohen-Macaulay local ring. Let $M$ be a finitely generated $R$-module with $ C\emph{-\pd}_R(M) < \infty $ and let $ n = \emph{\depth}(R) - \emph{\depth}(M) $. The following are equivalent:
	\begin{itemize}
		\item[(i)]{$M$ is Cohen-Macaulay.}
		\item[(ii)]{$M$ is  $ n $-$ C $-perfect.}
		\item[(iii)]{$\emph{\Hom}_R(M,E(R/ \fm)) $ is  $ n $-$ C $-coperfect.}
	\end{itemize}
\end{thm}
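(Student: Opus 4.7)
The plan is to handle the two equivalences (ii)$\Longleftrightarrow$(iii) and (i)$\Longleftrightarrow$(ii) separately. The first is almost immediate: over the local ring $(R,\fm)$, the module $E(R/\fm)$ is an injective cogenerator and $\Hom_R(M,E(R/\fm))\neq 0$ because $M\neq 0$, so both parts of Lemma 3.16 apply with $I=E(R/\fm)$ and give the equivalence of (ii) and (iii) at once.

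For (i)$\Longleftrightarrow$(ii), the strategy is to compute both $\mathcal{P}_C\emph{-}\grd_R(M)$ and $C\emph{-}\pd_R(M)$ in closed form in terms of $\dim(M)$ and $\depth(M)$, and then read off the equivalence. Since $M$ is finitely generated, Remark 3.2 gives
\[
\mathcal{P}_C\emph{-}\grd_R(M)=\inf\{\depth(R_{\fp})\mid \fp\in\Supp_R(M)\}.
\]
Using that $R$ is Cohen-Macaulay, so $\depth(R_{\fp})=\h(\fp)$ for every prime, and that the infimum of heights of primes in $\Supp_R(M)=V(\Ann_R(M))$ is $\h(\Ann_R(M))=\dim(R)-\dim(M)$ by the dimension formula in a CM ring, I obtain $\mathcal{P}_C\emph{-}\grd_R(M)=\depth(R)-\dim(M)$.

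Second, to establish $C\emph{-}\pd_R(M)=n$, I would combine Theorem 2.7(ii) with the classical Auslander-Buchsbaum formula. Since $C\emph{-}\pd_R(M)<\infty$, we have $M\in\mathcal{B}_C(R)$, so $M\cong C\otimes_R\Hom_R(C,M)$ with $\Tor_i^R(C,\Hom_R(C,M))=0$ for $i\geq 1$, and $\pd_R(\Hom_R(C,M))=C\emph{-}\pd_R(M)<\infty$. The crucial depth transfer
\[
\depth_R(\Hom_R(C,M))=\depth_R(M)
\]
follows from the depth formula for Tor-independent modules applied to the pair $(C,\Hom_R(C,M))$ together with $\depth_R(C)=\depth(R)$ from Proposition 2.4(iv). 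Auslander-Buchsbaum then yields
\[
C\emph{-}\pd_R(M)=\pd_R(\Hom_R(C,M))=\depth(R)-\depth_R(\Hom_R(C,M))=\depth(R)-\depth(M)=n.
\]

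Combining the two computations, $M$ is $n$-$C$-perfect precisely when $\mathcal{P}_C\emph{-}\grd_R(M)=C\emph{-}\pd_R(M)$, and the closed forms above reduce this to $\depth(R)-\dim(M)=\depth(R)-\depth(M)$, i.e., to $\dim(M)=\depth(M)$, which is exactly the Cohen-Macaulayness of $M$. The main obstacle is the depth transfer $\depth_R(\Hom_R(C,M))=\depth_R(M)$, which requires invoking the Tor-independence depth formula; the remainder is a routine assembly of results already established in the paper (Remark 3.2, Theorem 2.7, Proposition 2.4, Lemma 3.16) together with standard Cohen-Macaulay dimension arithmetic.
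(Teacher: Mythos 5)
Your proposal is correct, and all of its ingredients are the ones the paper uses; the difference is organizational. The paper proves a cycle (i)$\Rightarrow$(ii)$\Rightarrow$(iii)$\Rightarrow$(i): for (i)$\Rightarrow$(ii) it gets $C\emph{-}\pd_R(M)=n$ exactly as you do (depth transfer plus Auslander--Buchsbaum, citing \cite[Cor.\ 5.2.7]{CF} for $\depth_R(\Hom_R(C,M))=\depth_R(M)$), computes $\inf\{i:\Ext^i_R(M,C)\neq 0\}=\grd(\Ann_R(M),R)=\dim(R)-\dim(M)=n$, and then invokes Lemma 3.7/Proposition 3.8(i); it uses only part (i) of Lemma 3.16 for (ii)$\Rightarrow$(iii), and closes the cycle with a separate (iii)$\Rightarrow$(i) argument via Corollary 3.12 and Remark 3.2. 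You instead take both halves of Lemma 3.16 (legitimate, since $E(R/\fm)$ is an injective cogenerator and $\Hom_R(M,E(R/\fm))\neq 0$) to get (ii)$\Leftrightarrow$(iii) outright, and you get (i)$\Leftrightarrow$(ii) in one stroke by computing both invariants in closed form: $\mathcal{P}_C\emph{-}\grd_R(M)=\depth(R)-\dim(M)$ from Remark 3.2 and CM height arithmetic, and $C\emph{-}\pd_R(M)=n$ unconditionally. Your justification of the depth transfer via Auslander's depth formula is sound here, because $\Hom_R(C,M)$ has finite projective dimension (Theorem 2.7(ii)) and is Tor-independent of $C$ with $C\otimes_R\Hom_R(C,M)\cong M$ by the Bass class conditions, and $\depth_R(C)=\depth(R)$ by Proposition 2.4(iv). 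Your arrangement buys a slightly cleaner argument (the Ext-vanishing bookkeeping through Proposition 3.8 and the separate (iii)$\Rightarrow$(i) step become unnecessary), while the paper's cycle avoids relying on the cogenerator converse in Lemma 3.16(ii); mathematically the two routes rest on the same facts.
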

\begin{proof}
(i)$\Longrightarrow$(ii). Since $ C\emph{-}\pd_R(M) < \infty $, we have $ M \in \mathcal{B}_C(R) $ and hence $ \Ext^i_R(C,M) = 0 $ for all $ i \geq 0 $. Thus $ \depth_R(\Hom_R(C,M)) = \depth_R(M) $ by \cite[Corollary 5.2.7]{CF}. Therefore we have the equalities
  \[\begin{array}{rl}
  C\emph{-}\pd_R(M) &= \pd_R(\Hom_R(C,M)\\
  &= \depth(R) - \depth_R(\Hom_R(C,M)) \\
  &=  \depth(R) - \depth_R(M)\\
  &=  n,\\
  \end{array}\]
  in which the first equality is from Theorem 2.7(ii), and the second equality is Auslander-Buchsbaum formula. Hence, in view of Lemma 3.7, we have $ \Ext^i_R(M,\mathcal{P}_C) = 0 $ for all $ i > n $. Next, since $R$ is Cohen-Macaulay, there are equalities
    \[\begin{array}{rl}
    \inf\{ i \geq 0 \mid \Ext_R^i(M,C) \neq 0 \} &= \inf\{ i \geq 0 \mid \Ext_R^i(R/\Ann_R(M),C) \neq 0 \}\\
    &= \grd(\Ann_R(M) , C) \\
    &= \grd(\Ann_R(M) , R)  \\
    &=  \dim(R) - \dim_R(M)\\
    &=  n.\\
    \end{array}\]
Thus, since $M$ is finitely generated, we have $ \Ext^i_R(M,\mathcal{P}_C) = 0 $ for all $ i < n $ by \cite[Theorem 3.2.15]{EJ1}. Hence $M$ is $ n $-$ C $-perfect by Proposition 3.8(i).

 (ii)$\Longrightarrow$(iii). Use Lemma 3.16 and the fact that $ E(R/\fm) $ is an injective cogenerator.

 (iii)$\Longrightarrow$(i). We have the equalities
     \[\begin{array}{rl}
     \dim_R(M) &= \dim(R) - \mathcal{P}\emph{-}\grd_R(M)\\
     &= \dim(R) - \mathcal{P}_C\emph{-}\grd_R(M)\\
     &= \dim(R) - \mathcal{I}_C\emph{-}\grd_R\big(\Hom_R(M,E(R/ \fm))\big) \\
     &= \dim(R) - n\\
     &= \depth(M), \\
     \end{array}\]
in which the first equality holds since $R$ is Cohen-Macaulay, the second equality holds by Remark 3.3, the third equality holds by Corollary 3.12, and the fourth equality holds by assumption. Hence, $M$ is Cohen-Macaulay.
\end{proof}
\begin{thm}
	Let $ (R, \fm) $ be local with $ \emph{\dim}(R) = n $. The following are equivalent:
	\begin{itemize}
		\item[(i)]{$C$ is dualizing.}
		\item[(ii)]{$E(R/ \fm) $ is  n-$ C $-perfect.}
		\item[(iii)]{$\emph{\Hom}_R(C,E(R/ \fm)) $ is  n-perfect.}
		\item[(iv)]{$\widehat{R}$ is n-$ C $-coperfect.}
		\item[(v)]{$\widehat{C}$ is n-coperfect.}
	\end{itemize}
\end{thm}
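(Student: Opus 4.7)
The strategy is to split the five-way equivalence into two ``Foxby--equivalence'' pairs, (ii)$\Leftrightarrow$(iii) and (iv)$\Leftrightarrow$(v), and then to connect (i) to each pair using Matlis duality together with the explicit shape of the minimal injective resolution of a dualizing module. The auxiliary identities from Theorem~2.7, Proposition~3.5 and Lemma~2.8 will do most of the bookkeeping.

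For (ii)$\Leftrightarrow$(iii), observe that $\id_R(E(R/\fm))=0$ forces $E(R/\fm)\in\mathcal{B}_C(R)$, so Proposition~3.5(i) gives $\mathcal{P}_C\emph{-}\grd_R(E(R/\fm))=\mathcal{P}\emph{-}\grd_R(\Hom_R(C,E(R/\fm)))$ and Theorem~2.7(ii) gives $C\emph{-}\pd_R(E(R/\fm))=\pd_R(\Hom_R(C,E(R/\fm)))$. For (iv)$\Leftrightarrow$(v), first check that $\widehat{R}\in\mathcal{A}_C(R)$: flatness of $\widehat{R}$ kills the Tor, the semidualizing hypothesis together with flatness kills the Ext, and $\Hom_R(C,C\otimes_R\widehat{R})\cong\Hom_R(C,C)\otimes_R\widehat{R}\cong\widehat{R}$. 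Since $C\otimes_R\widehat{R}\cong\widehat{C}$, Proposition~3.5(ii) combined with Theorem~2.7(i) then translates (iv) into (v).

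For (ii)$\Rightarrow$(i) and (v)$\Rightarrow$(i) only the ``finite-dimension'' halves of the hypotheses are used. If (ii) holds, then $C\emph{-}\pd_R(E(R/\fm))=n$ gives $\pd_R(\Hom_R(C,E(R/\fm)))=n$ by Theorem~2.7(ii); since $\Hom_R(C,E(R/\fm))$ is an Artinian $\widehat{R}$-module, Matlis duality over $\widehat{R}$ identifies this projective dimension with $\id_{\widehat{R}}(\widehat{C})$, and faithful flatness of $\widehat{R}$ yields $\id_R(C)=n$, so $C$ is dualizing. If (v) holds, the same faithful-flatness argument underlying Lemma~2.8(ii) gives $\id_R(C)=\id_R(C\otimes_R\widehat{R})=\id_R(\widehat{C})=n$, again giving (i).

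For (i)$\Rightarrow$(ii): when $C$ is dualizing, $R$ is Cohen--Macaulay with $\depth(R)=n$, and the minimal injective resolution of $C$ has the form $0\to C\to I^0\to\cdots\to I^n\to 0$ with $I^j=\bigoplus_{\h(\fp)=j}E(R/\fp)$. Applying $\Hom_R(E(R/\fm),-)$ collapses the complex: $\Hom_R(E(R/\fm),E(R/\fp))=0$ for $\fp\neq\fm$ because the source is $\fm$-torsion while $E(R/\fp)$ has no $\fm$-torsion when $\fp\neq\fm$, whereas $\Hom_R(E(R/\fm),E(R/\fm))\cong\widehat{R}$. Hence $\Ext^i_R(E(R/\fm),C)=0$ for $i<n$ and $\Ext^n_R(E(R/\fm),C)\cong\widehat{R}$, so $\mathcal{P}_C\emph{-}\grd_R(E(R/\fm))=n$; combined with $C\emph{-}\pd_R(E(R/\fm))=\id_R(C)=n$ coming from Matlis duality, this shows $E(R/\fm)$ is $n$-$C$-perfect. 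The implication (i)$\Rightarrow$(v) is obtained by running the same Bass-number computation over $\widehat{R}$, on which $\widehat{C}$ is dualizing. The principal technical obstacle I anticipate is the Matlis-duality identification $\pd_R(\Hom_R(C,E(R/\fm)))=\id_R(C)$, which requires careful passage to the completion and exchange of projective/injective resolutions under $\Hom_R(-,E(R/\fm))$; the remaining steps are essentially bookkeeping with the identities collected earlier in the paper.
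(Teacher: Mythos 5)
Your overall architecture is sound, and parts of it are even more complete than the paper: the translations (ii)$\Leftrightarrow$(iii) and (iv)$\Leftrightarrow$(v) via Proposition 3.5 and Theorem 2.7 are exactly the paper's argument, and your implications (ii)$\Rightarrow$(i) and (v)$\Rightarrow$(i) are correct (the paper only records (iv)$\Rightarrow$(i)). But two of your steps have genuine gaps. First, in (i)$\Rightarrow$(ii) you compute $\Ext^i_R(E(R/\fm),C)$ from the minimal injective resolution of $C$ and conclude $\mathcal{P}_C\text{-}\grd_R(E(R/\fm))=n$. That conclusion needs $\Ext^i_R(E(R/\fm),C\otimes_R P)=0$ for all $i<n$ and \emph{every} projective $P$, not just $P=R$. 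Since $E(R/\fm)$ is not finitely generated, Remark 3.2 does not apply and $\Ext$ out of it does not commute with the infinite direct sums hidden in a general $P$, so the case $P=R$ does not formally imply the rest. The paper closes this by tensoring the resolution $(\dagger)$ with $P$ and using that $E(R/\fp)\otimes_R P$ is again a direct sum of copies of $E(R/\fp)$ (\cite[Theorem 3.3.12]{EJ1}), so that the tensored complex is an injective resolution of $C\otimes_R P$ on which the same vanishing argument runs.

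Second, your implication (i)$\Rightarrow$(v), dismissed as ``the same Bass-number computation over $\widehat{R}$,'' is precisely where the paper's hardest work sits, and your sketch does not address it. Being $n$-coperfect over $R$ requires $\Ext^i_R(E,\widehat{C})=0$ for all $i<n$ and every injective $R$-module $E$, in particular $E=E(R/\fp)$ with $\fp\neq\fm$; these modules are not seen by a computation over $\widehat{R}$ (an injective resolution of $\widehat{C}$ over $\widehat{R}$ contains modules $E_{\widehat{R}}(\widehat{R}/\fq)$ against which $\Hom_R(E(R/\fp),-)$ does not vanish), and there is no automatic change of rings here. In the paper's corresponding step (i)$\Rightarrow$(iv), the bulk of the proof is exactly this reduction: rewrite $\Ext^i_R(\Hom_R(C,I),\widehat{R})$ as the Matlis dual of $\Tor^R_i(\Hom_R(C,I),E(R/\fm))$, kill the primes $\fp\neq\fm$ by the ``isomorphism which is locally nilpotent'' trick with $x\in\fm\smallsetminus\fp$, and then compute the degree-$n$ Tor from the flat resolution obtained by applying $\Hom_R(-,E(R/\fm))$ to $(\dagger)$. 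An analogous device (for instance $\Ext^i_R(E,\widehat{C})\cong\Hom_R(\Tor^R_i(E,\Hom_R(C,E(R/\fm))),E(R/\fm))$ followed by the same $x$-trick) is indispensable in your route as well; without it the claim $\mathcal{I}\text{-}\grd_R(\widehat{C})\geq n$ is unsubstantiated. A minor further point: in (i)$\Rightarrow$(ii) Matlis duality only gives $\fd_R(\Hom_R(C,E(R/\fm)))=\id_R(C)$; to get $C\text{-}\pd_R(E(R/\fm))=\pd_R(\Hom_R(C,E(R/\fm)))\leq n$ you need Foxby's result \cite[Corollary 3.4]{F1}, as the paper does (for your (ii)$\Rightarrow$(i) only the inequality $\fd\leq\pd$ is used, so that direction is fine).
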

\begin{proof}
(i)$\Longrightarrow$(ii). First, note that $ \fd_R(\Hom_R(C,E(R/ \fm))) = n $. Hence by \cite[Corollary 3.4]{F1}, we have $ \pd_R(\Hom_R(C,E(R/ \fm))) \leq n $, from which we conclude that $ C $-$ \pd_R(E(R/ \fm)) = \pd_R(\Hom_R(C,E(R/ \fm))) = n $ by Theorem 2.7(ii). Next, note that the minimal injective resolution of
$ C $ is of the form \\
\centerline{$ 0 \rightarrow C \rightarrow \underset{\h(\fp) = 0} \bigoplus E(R/ \fp) \rightarrow \underset{\h(\fp) = 1} \bigoplus E(R/ \fp) \rightarrow \cdots \rightarrow E(R/ \fm) \rightarrow 0 $. $ (\dagger) $}
Let $P$ be a projective $R$-module. Applying the exact functor $ - \otimes_R P $ on the $ (\dagger) $, we get the following exact complex \\
\centerline{$ 0 \rightarrow C \otimes_R P \rightarrow \underset{\h(\fp) = 0} \bigoplus E(R/ \fp) \otimes_R P \rightarrow \underset{\h(\fp) = 1} \bigoplus E(R/ \fp) \otimes_R P \rightarrow \cdots \rightarrow E(R/ \fm) \otimes_R P \rightarrow 0 $.}
which is an injective resolution for  $ C \otimes_R P $. By \cite[Theorem 3.3.12]{EJ1}, the injective $R$-module $ E(R / \fp) \otimes_R P$ is a direct sum of copies of
$ E(R / \fp)$ for each $ \fp \in \Spec(R)$. Therefore, since $ \Hom_R(E(R/ \fm),E(R/ \fp)) = 0 $ for any prime ideal $ \fp \neq \fm $, we have
$ \Ext_R^i(E(R/ \fm), \mathcal{P}_C) = 0 $ for all $  i \neq n $. But, if $ P = R $, then
  \[\begin{array}{rl}
  \Ext_R^n(E(R/ \fm), C) &= \Hom_R(E(R/ \fm), E(R/ \fm))\\
  &\cong \widehat{R} \\
  &\neq 0.\\
  \end{array}\]
Hence,  $ E(R/ \fm) $ is $ n $-$ C $-perfect by proposition 3.8(i), as wanted.

(ii)$\Longleftrightarrow$(iii). Note that $ \Hom_R(C,E(R/ \fm)) \in \mathcal{A}_C(R) $. Now,
$E(R/ \fm) $ is  $ n $-$ C $-perfect if and only if $ \mathcal{P}_C \emph{-}\grd_R(E(R/ \fm)) = C\emph{-}\pd_R(E(R/ \fm)) = n$, and this is the case if and only if $ \mathcal{P} \emph{-}\grd_R(\Hom_R(C,E(R/ \fm))) = \pd_R(\Hom_R(C,E(R/ \fm))) = n $ by Theorem 2.7(ii) and Lemma 3.3(i).

(i)$\Longrightarrow$(iv). First, note that there are equalities \\
\centerline{$ n = \id_R(C) = C\emph{-}\id_R(R) = C\emph{-}\id_R(\widehat{R}) $}
in which the second equality is from Theorem 2.7(i) and the last one is from Lemma 2.8(ii). Hence, we have $ \Ext_R^i(\mathcal{I}_C,\widehat{R}) = 0 $ for all $ i > n $ by Lemma 3.6.  Next, we show that $ n $ is the least
 integer for which $ \Ext_R^n(\mathcal{I}_C,\widehat{R}) \neq 0 $. To do this, first we show that if $ \Ext_R^i(\mathcal{I}_C,\widehat{R}) \neq 0 $ for some integer $ i $, then $ \Ext_R^i(\Hom_R(C,E(R/\fm)),\widehat{R}) \neq 0 $. Assume that $ I $ is an injective $R$-module for which
 $ \Ext_R^i(\Hom_R(C,I),\widehat{R}) \neq 0 $. Suppose, on the contrary, that $ \fm \notin \Ass_R(I) $. Without loss of generality, we can assume that $ I = E(R/\fp) $ with $ \fp \neq \fm $. We have the isomorphisms
  \[\begin{array}{rl}
  \Ext_R^i(\Hom_R(C,I),\widehat{R}) &\cong \Ext_R^i\big(\Hom_R(C,I), \Hom_R(E(R/\fm),E(R/\fm))\big) \\
  &\cong \Hom_R^i\big(\Tor^R_i(\Hom_R(C,I),E(R/\fm)) , E(R/\fm)\big),\\
  \end{array}\]
in which the second isomorphism is from \cite[Theorem 3.2.1]{EJ1}. Hence $ \Ext_R^i(\Hom_R(C,I),\widehat{R}) \neq 0 $ if and only if $ \Tor^R_i(\Hom_R(C,I),E(R/\fm)) \neq 0 $. Choose an element $ x \in \fm \smallsetminus \fp $. Then multiplication of $ x $ induces an isomorphism on $ I $ and hence on $ \Hom_R(C,I) $. It follows that the homomorphism \\
\centerline{$ \Tor^R_i(\Hom_R(C,I),E(R/\fm)) \overset{x.}\longrightarrow \Tor^R_i(\Hom_R(C,I),E(R/\fm)) $}
is both an isomorphism and locally nilpotent, whence $ \Tor^R_i(\Hom_R(C,I),E(R/\fm)) = 0 $, a contradiction. Consequently, we need only to show that $ n $ is the least
integer for which $ \Ext_R^n(\Hom_R(C,E(R/\fm)),\widehat{R}) \neq 0 $ or equivalently $ \Tor^R_n(\Hom_R(C,E(R/\fm)),E(R/\fm)) \neq 0 $. Applying the exact functor $ \Hom_R(-,E(R/\fm)) $ on $ (\dagger) $ above, we get a flat resolution \\
\centerline{$ 0 \rightarrow F_n \rightarrow \cdots \rightarrow F_1 \rightarrow F_0 \rightarrow \Hom_R(C,E(R/ \fm)) \rightarrow 0 $,}
in which $ F_i = \Hom_R\Big(\underset{\h(\fp) = i} \bigoplus E(R/ \fp) , E(R/\fm)\Big)$. Now, by applying $ - \otimes_R E(R/\fm) $, on this flat resolution, we can compute $ \Tor^R_i(\Hom_R(C,E(R/\fm)),E(R/\fm)) $. By the above argument, $ \Tor^R_i(\Hom_R(C,E(R/\fm)),E(R/\fm)) = 0 $ for all $ 0 \leq i < n $. On the other hand, we have
  \[\begin{array}{rl}
  \Tor^R_n(\Hom_R(C,E(R/\fm)),E(R/\fm)) &= F_n \otimes_R E(R/\fm) \\
  &= \Hom_R(E(R/\fm),E(R/\fm)) \otimes_R E(R/\fm)\\
  &= \widehat{R} \otimes_R E(R/\fm) \\
  &\cong E(R/\fm)\\
   &\neq 0.\\
  \end{array}\]
Thus $ \Ext_R^i(\mathcal{I}_C,\widehat{R}) = 0 $ for all $ i \neq 0 $ and $ \Ext_R^n(\Hom_R(C,E(R/\fm)),\widehat{R}) \neq 0 $. Consequently,
$ \widehat{R} $ is $ n $-$ C $-coperfect by Proposition 3.8(ii).

(iv)$\Longleftrightarrow$(v). Note that $ \widehat{R} \in \mathcal{A}_C(R) $, and so $ \widehat{C} \cong C \otimes_R \widehat{R} \in \mathcal{B}_C(R) $ by \cite[Theorem 2.8(b)]{TW}. Now,
$\widehat{R} $ is  $ n $-$ C $-coperfect if and only if $ \mathcal{I}_C \emph{-}\grd_R(\widehat{R}) = C\emph{-}\id_R(\widehat{R}) = n$, and this is the case if and only if $ \mathcal{I} \emph{-}\grd_R(\widehat{C}) = \id_R(\widehat{C}) = n $ by Theorem 2.7(i) and Lemma 3.3(ii).

(iv)$\Longrightarrow$(i). One has \\
\centerline{$ \id_R(C) = C\emph{-}\id_R(R) = C\emph{-}\id_R(\widehat{R}) = n $}
 where the first equality is from Theorem 2.7(i), the second equality is from Lemma 2.8(ii), and the last equality is the assumption. Hence $ C $ is dualizing.
\end{proof}
Recall that for an $R$-module $M$, the $ i $-th local cohomology module of $M$ with respect to an ideal $ \fa $ of $R$, denoted by $ \H^i_{\fa}(M) $, is defined to be \\
\centerline{$  \H^i_{\fa}(M) = \underset{\underset{n \geq 1} \longrightarrow}  \lim \Ext^i_R(R/ \fa^n , M) $.}
\begin{thm}
	Let $ (R, \fm) $ be local with $ \emph{\dim}(R) = n $. The following are equivalent:
	\begin{itemize}
		\item[(i)]{$R$ is Gorenstein.}
		\item[(ii)]{any quasidualizing $R$-module is $ n $-perfect.}
		\item[(iii)]{there exists an $ n $-perfect quasidualizing $R$-module with \emph{1-}dimensional socle.}
	\end{itemize}
\end{thm}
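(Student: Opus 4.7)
The plan is to reduce the equivalence to Theorem 3.19 applied with $C=R$, under which ``$C$ is dualizing'' becomes ``$R$ is Gorenstein'' and ``$n$-$C$-perfect'' becomes ``$n$-perfect''. The bridge from the general quasidualizing modules appearing in (ii), (iii) to the specific module $E(R/\fm)$ needed to invoke Theorem 3.19 is the following observation: any quasidualizing $R$-module $T$ whose socle is one-dimensional over $R/\fm$ is isomorphic, as an $R$-module, to $E(R/\fm)$.

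To justify this, set $(-)^{\vee}=\Hom_R(-,E(R/\fm))$. As $T$ is Artinian it has a canonical $\widehat R$-module structure, and Matlis duality makes $T^{\vee}$ a finitely generated $\widehat R$-module with $T^{\vee\vee}\cong T$. Transporting the defining conditions of a quasidualizing module through Matlis duality shows that $T^{\vee}$ is a semidualizing $\widehat R$-module. Via the identification of Bass numbers of $T$ with Betti numbers of $T^{\vee}$, the socle hypothesis gives that $T^{\vee}/\fm T^{\vee}$ is one-dimensional over $R/\fm$, so by Nakayama $T^{\vee}$ is cyclic over $\widehat R$; writing $T^{\vee}\cong\widehat R/I$, the homothety isomorphism forces $\widehat R\cong\Hom_{\widehat R}(\widehat R/I,\widehat R/I)\cong\widehat R/I$, whence $I=0$ and $T\cong(\widehat R)^{\vee}=E(R/\fm)$.

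Granted this observation, the three implications are short. For (i)$\Rightarrow$(ii), if $T$ is any quasidualizing $R$-module then $T^{\vee}$ is a semidualizing module over the Gorenstein ring $\widehat R$, hence isomorphic to $\widehat R$ itself (a standard fact, cf.\ \cite{C}); thus $T\cong E(R/\fm)$, and Theorem 3.19 with $C=R$ gives that $E(R/\fm)$ is $n$-perfect. For (ii)$\Rightarrow$(iii), take $T=E(R/\fm)$, which is quasidualizing with one-dimensional socle and is $n$-perfect by hypothesis. For (iii)$\Rightarrow$(i), the observation turns $T$ into $E(R/\fm)$, so $E(R/\fm)$ is $n$-$R$-perfect, and Theorem 3.19 with $C=R$ yields that $R$ is dualizing, i.e., Gorenstein.

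The main obstacle is the observation itself: carefully transporting the homothety isomorphism and the vanishing of $\Ext^i_R(T,T)$ through Matlis duality in order to recognize $T^{\vee}$ as a semidualizing $\widehat R$-module, and identifying the minimal number of generators of $T^{\vee}$ over $\widehat R$ with the socle dimension of $T$. Once this is in hand, all three directions collapse to instances of the $C=R$ case of Theorem 3.19, together with the classification of semidualizing modules over Gorenstein local rings.
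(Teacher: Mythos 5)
Your argument is correct in substance, and its key structural observation is sound: for any quasidualizing $T$, Kubik's results (cited in the paper as \cite[Proposition 2.1]{K} and \cite[Theorem 3.1]{K}) make $\Hom_{\widehat{R}}(T,E(R/\fm))$ a semidualizing $\widehat{R}$-module; the socle hypothesis makes it cyclic, and a cyclic semidualizing module $\widehat{R}/I$ must have $I=0$ because the homothety map $\widehat{R}\rightarrow\Hom_{\widehat{R}}(\widehat{R}/I,\widehat{R}/I)\cong\widehat{R}/I$ is the canonical surjection; Matlis reflexivity then gives $T\cong E(R/\fm)$. For (i)$\Rightarrow$(ii) your route and the paper's essentially coincide (both identify $T$ with $E(R/\fm)$ via \cite[Corollary 8.6]{C} over $\widehat{R}$), except that the paper then verifies $n$-perfectness of $E(R/\fm)$ by hand, using the Bass numbers of the minimal injective resolution of the Gorenstein ring $R$, while you quote Theorem 3.19 with $C=R$. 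The genuine divergence is in (iii)$\Rightarrow$(i): the paper never identifies $T$ with $E(R/\fm)$ there; instead it reduces to $\widehat{R}$, obtains a semidualizing module $C\cong\Hom_R(T,E(R/\fm))$ with $\id_R(C)\le\pd_R(T)=n$, concludes that $C$ is dualizing and $R$ is Cohen--Macaulay, and then uses local duality together with the spectral sequence $\Ext^p_R(R/\fm,\H^q_{\fm}(R))\Rightarrow\Ext^{p+q}_R(R/\fm,R)$ to show that $R$ has type one. Your lemma is cleaner and shows, in addition, that the $n$-perfectness hypothesis plays no role in pinning down $T$.

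One caveat you should address. Your (iii)$\Rightarrow$(i) ultimately rests on the implication ``$E(R/\fm)$ is $n$-perfect $\Rightarrow$ $R$ is Gorenstein'', i.e.\ (ii)$\Rightarrow$(i) of Theorem 3.19 specialized to $C=R$. That implication is part of the \emph{statement} of Theorem 3.19, but the paper's written proof of 3.19 only establishes (i)$\Rightarrow$(ii)$\Leftrightarrow$(iii), (i)$\Rightarrow$(iv)$\Leftrightarrow$(v) and (iv)$\Rightarrow$(i); the direction you need is essentially what the paper's own proof of Theorem 3.20(iii)$\Rightarrow$(i) supplies, so citing 3.19 here is uncomfortably close to circular as the paper stands. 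To make your argument self-contained, add the missing step: $E(R/\fm)$ is injective and, by hypothesis, of finite projective dimension, and the existence of a nonzero module of finite projective and finite injective dimension forces $R$ to be Gorenstein (a Foxby--Holm type result, applicable since $\Supp_R E(R/\fm)=\{\fm\}$); alternatively, run the paper's local-duality and spectral-sequence argument with $T\cong E(R/\fm)$ and $C=R$. With that addition your proof is complete.
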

\begin{proof}
(i)$\Longrightarrow$(ii). Assume that $ T $ is a quasidualizing $R$-module. By \cite[Proposition 2.1]{K}, $T$ is a quasidualizing
$ \widehat{R} $-module. Also, since $ \widehat{R} $ is a complete, by \cite[Theorem 3.1]{K}, there exists a semidualizing $ \widehat{R} $-module $K$ for which
$ K \cong \Hom_{\widehat{R}}(T,E(R/\fm)) $. But, as $ \widehat{R} $ is Gorenstein, we have $ K \cong \widehat{R} $ by \cite[Corollary 8.6]{C}.  Therefore we have the isomorphisms
   \[\begin{array}{rl}
   T &\cong \Hom_{\widehat{R}}(\Hom_{\widehat{R}}(T,E(R/\fm)),E(R/\fm))\\
     &\cong \Hom_{\widehat{R}}(\widehat{R},E(R/\fm))\\
     &\cong E(R/\fm),\\
   \end{array}\]
in which the first isomorphism holds because $ T $ is Matlis reflexive over $ \widehat{R} $.
Now, if $P$ is a projective $R$-module, then it is free and hence by using the minimal injective resolution of $R$, we have
$ \mu^i(\fp,P) = 0 $ for all $ i \neq \h(\fp) $. Therefore $ \Ext_R^i(E(R/\fm),P) = 0 $ for all $ i \neq n$, and that \\
\centerline{$ \Ext_R^n(E(R/\fm),P) \cong \Hom_R(E(R/\fm),E(R/\fm)^{\mu^n(\fm,P)}) \neq 0 $.}
Thus, since $ \pd_R(E(R/\fm)) = n $, the $R$-module $ T = E(R/\fm) $ is $ n $-perfect by Proposition 3.8(i).

(ii)$\Longrightarrow$(iii). The desired module is $ E(R/\fm) $.

(iii)$\Longrightarrow$(i). Assume that $ T $ is an $ n $-perfect quasidualizing $R$-module with $ 1 $-dimensional socle. Observe that, by \cite[Proposition 2.1]{K}, $T$ is a quasidualizing $\widehat{R}$-module. Also, we have $ \pd_{\widehat{R}}(T) = n $ since
 $ T \otimes_R \widehat{R} \cong T $. Next, a similar argument as in \cite[Lemma 9.1.4]{EJ1} yields the isomorphisms \\
\centerline{ $ \Ext^i_R(T,R) \cong \Ext^i_{\widehat{R}}(T,\widehat{R}) $,}
for all $ i \geq 0 $. Hence, if $ \Ext^i_R(T,F) = 0$ for any free $R$-module $F$, then $ \Ext^i_{\widehat{R}}(T,X) = 0$ for any free $\widehat{R}$-module $X$. Finally, if $ \vdim_{R/\fm}(\Soc_R(T)) = 1 $, then $ \Hom_R(R/\fm,T) \cong R/\fm $ and therefore
$ \Hom_{\widehat{R}}(R/\fm,T) \cong R/\fm $, whence $ \vdim_{R/\fm}(\Soc_{\widehat{R}}(T)) = 1 $. Consequently, $ T $ is an $ n $-perfect $ \widehat{R} $-module  with $ 1 $-dimensional socle. On the other hand, $ R $ is Gorenstein if and only
 if $ \widehat{R} $ is so. Hence, we can replace $R$ by  $\widehat{R}$ and assume that $R$ is complete. In the new case, by \cite[Theorem 3.1]{K}, there exists a semidualizing
 $ R $-module $C$ for which $ C \cong \Hom_R(T,E(R/\fm)) $. Thus we have $ \id_R(C) \leq \pd_R(T) = n $, whence $C$ is dualizing, and hence the
 (in)equalities \\
 \centerline{$ \dim(R) = \dim_R(C) \leq \id_R(C) = \depth(R) $,}
 show that $ R $ is Cohen-Macaulay. Now, we can use local duality \cite[Theorem 11.2.8]{BS} to get the isomorphisms \\
 \centerline{$ \H^n_{\fm}(R) = \Hom_R(C,E(R/\fm)) \cong T $,}
 from which we conclude that  $ \vdim_{R/\fm}(\H^n_{\fm}(R)) = 1 $.
 Consider the following Grothendieck's third quadrant spectral sequence \cite[theorem 10.47]{Ro} \\
 \centerline{$ \E_2^{p,q} = \Ext^p_R(R/\fm,\H^q_{\fm}(R)) \underset{p}\Longrightarrow \Ext^{p+q}_R(R/\fm,R) $.}
 Since $R$ is Cohen-Macaulay, we have $ \H^q_{\fm}(R) = 0 $ for all $ q \neq n $ by \cite[Corollary 6.2.9]{BS}, and hence  $ \E_2^{p,q} $ collapses on the $ n $-th column, whence \\
 \centerline{$ \Ext^n_R(R/\fm,R) \cong \Hom_R(R/\fm,\H^n_{\fm}(R)) \cong R/\fm $.}
 It follows, from \cite[Theorem 9.2.27]{EJ1}, that $ R $ is Gorenstein.
\end{proof}
By using the same argument as in the proof of the above theorem, we have the following corollary which is a characterization of Cohen-Macaulay local rings.
\begin{cor}
	Let $ (R, \fm) $ be a complete local ring with $ \emph{\dim}(R) = n $. The following are equivalent:
	\begin{itemize}
		\item[(i)]{$R$ is Cohen-Macaulay.}
		\item[(ii)]{there exists an $ n $-perfect quasidualizing $R$-module.}
	\end{itemize}
\end{cor}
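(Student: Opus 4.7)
The plan is to follow the structure of the proof of Theorem 3.20, simply dropping the features specific to the Gorenstein case (the $1$-dimensional socle hypothesis and the local-duality / spectral-sequence computation of $\Ext^n_R(R/\fm,R) \cong R/\fm$). Two ingredients will do all the work: Kubik's Theorem 3.1 from \cite{K}, which over a complete local ring implements a Matlis-type duality $C \leftrightarrow \Hom_R(C,E(R/\fm))$ between semidualizing and quasidualizing modules, and the equivalence (i)$\Longleftrightarrow$(iii) of Theorem 3.19, which for a semidualizing $C$ relates the dualizing property of $C$ to the $n$-perfectness of its Matlis dual $\Hom_R(C,E(R/\fm))$.

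For (i)$\Longrightarrow$(ii) I would first invoke the existence of a dualizing module $C$ for the complete local Cohen-Macaulay ring $R$, available via Cohen's structure theorem. Setting $T := \Hom_R(C,E(R/\fm))$, Theorem 3.19 immediately yields that $T$ is $n$-perfect, and Kubik's theorem ensures that $T$ is quasidualizing, providing the required module.

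For (ii)$\Longrightarrow$(i) I would start with an $n$-perfect quasidualizing $R$-module $T$ and produce, via Kubik's theorem, a semidualizing $C \cong \Hom_R(T,E(R/\fm))$. The key step is to dualize a length-$n$ projective resolution $P_\bullet \to T$ by the exact functor $\Hom_R(-,E(R/\fm))$: since Matlis duals of projective modules are injective (because products of injectives over a Noetherian ring are injective), this produces an injective coresolution of $C$ of length at most $n$, so $\id_R(C) \leq n = \dim(R)$. A semidualizing module of finite injective dimension is dualizing, and then the chain
\[\dim(R) = \dim_R(C) \leq \id_R(C) = \depth(R) \leq \dim(R),\]
already used at the end of the proof of Theorem 3.20, forces equality throughout, whence $R$ is Cohen-Macaulay.

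The main obstacle I anticipate is only a bookkeeping one in the (ii)$\Longrightarrow$(i) direction, namely verifying that the Matlis-dualized resolution of $T$ is a genuine injective coresolution of $C$ whose length is bounded by $\pd_R(T) = n$. This reduces to the routine facts that $\Hom_R(-,E(R/\fm))$ is exact and carries projectives to injectives; beyond this, completeness of $R$ is used only to invoke Kubik's bijection between semidualizing and quasidualizing modules, and no technique beyond those already deployed in Theorems 3.19 and 3.20 is needed.
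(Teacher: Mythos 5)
Your proposal is correct and follows essentially the same route as the paper: both directions rest on Kubik's correspondence between semidualizing and quasidualizing modules over the complete ring together with Theorem 3.19, and your (ii)$\Longrightarrow$(i) merely fills in the detail behind the paper's one-line assertion $\id_R(C)\leq\pd_R(T)=n$ by Matlis-dualizing a projective resolution. (Only a cosmetic slip: products of injectives are injective over any ring; Noetherianness is needed for direct sums, not products.)
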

\begin{proof}
(i)$\Longrightarrow$(ii). By assumption, $R$ has a dualizing module, say $D$. Now, by Theorem 3.19 and \cite[theorem 3.1]{K}, $ \Hom_R(D,E(R/\fm)) $ is an $ n $-perfect quasidualizing $R$-module.

(ii)$\Longrightarrow$(i). Suppose that $ T $ is an $ n $-perfect quasidualizing $R$-module. Then by \cite[theorem 3.1]{K}, there exists a semidualizing module $C$ for which $ T = \Hom_R(C,E(R/\fm)) $. Now, we have $ \id_R(C) \leq \pd(T) = n $, whence $C$ is dualizing and $R$ is Cohen-Macaulay.
\end{proof}
\begin{ques}
\emph{Can we omit the condition '1-dimensional socle' in the Theorem 3.20? More precisely, if there exists an $ n $-perfect quasidualizing $R$-module, then is $R$ Gorenstein?}
\end{ques}

\textbf{Acknowledgment.} The authors are grateful to the referee for his/her invaluable comments.

\bibliographystyle{amsplain}

\end{document}